\newcommand{\reff}[1]{(\ref{#1})}
\theoremstyle{plain}
\newtheorem{theo}{Theorem}[section]
\newtheorem{cor}[theo]{Corollary}
\newtheorem{prop}[theo]{Proposition}
\newtheorem{proposition}[theo]{Proposition}
\newtheorem{lem}[theo]{Lemma}
\theoremstyle{remark}
\newtheorem{rem}[theo]{Remark}
\def\C{\ensuremath{\mathcal{C}}}
\def\D{\mathcal{D}}
\def\E{\ensuremath{\mathbb{E}}}
\def\F{\ensuremath{\mathcal{F}}}
\def\G{\ensuremath{\mathcal{G}}}
\def\M{\ensuremath{\mathcal{M}}}
\def\N{\ensuremath{\mathbb{N}}}
\def\p{\ensuremath{\mathcal{P}}}
\newcommand{\rP}{{\rm P}}
\def\P{\ensuremath{\mathbb{P}}}
\def\R{\ensuremath{\mathbb{R}}}
\def\equal{\ensuremath{\mathrel{\mathop:}=}}
\def\to{\rightarrow}
\def\equal{\ensuremath{\mathrel{\mathop:}=}}
\newcommand{\ind}{{\bf 1}}
\newcommand{\expp}[1]{\mathop {\mathrm{e}^{ #1}}}
\begin{document}

\title{Change of measure in the lookdown particle system}

\date{\today}

\author{Olivier H\' enard} 

\address{
Olivier H\' enard,
Universit\' e Paris-Est, CERMICS, 6-8
av. Blaise Pascal, 
  Champs-sur-Marne, 77455 Marne La Valle, France.}

\email{henardo@cermics.enpc.fr}

\thanks{This work is partially supported by the ``Agence Nationale de
  la Recherche'', ANR-08-BLAN-0190.}

% reste à faire:
% checker la convergence de Z à la suite de X
% introduire la filtration D
% parler de l'échangeabilité au niveau du processus historique
% regarder ce que la condition (dk non continu) donne dans le cas surcritique, ou plutôt se demander alors ce que veut dire la h transform. à ce sujet, établir la réciproque (mettre ça sous forme de remarque par exemple): la h transform est la processus conditionné à ce qu'une particule prise au hasard dans le support (non vide, donc conditionné lui aussi) suive une h transform.
% voir si on peut faire quelque chose avec les résultats de haas et miermont 

\begin{abstract}
We perform various changes of measure in the lookdown particle system of Donnelly and Kurtz.
The first example is a product type h-transform related to conditioning a Generalized Fleming Viot process without mutation on coexistence of some genetic types in remote time. We give a pathwise construction of this h-transform by just ``forgetting'' some reproduction events in the lookdown particle system. We also provide an intertwining relationship for the Wright Fisher diffusion and explicit the associated pathwise decomposition.
The second example, called the linear or additive h-transform, concerns a wider class of measure valued processes with spatial motion. Applications include: -a simple description of the additive h-transform of the Generalized Fleming Viot process, which confirms a suggestion of Overbeck for the usual Fleming Viot process -an immortal particle representation for the additive h-transform of the Dawson Watanabe process.
\end{abstract}

% The h-transform acts as follows: the total mass process is size biased, and the motion of the first level particle is modified
% was introduced by Kimura \cite{KI57} for Wright Fisher processes, and 

% This gives a generalization of a theorem of Kimura  stating that the (classical) Fleming Viot process conditioned on non absorbtion is an $h$-transform of the original one

\keywords{Doob h-transform, Fleming Viot process, Wright Fisher diffusion, Superprocesses, Lookdown particle system}

\subjclass[2000]{60J25, 60G55, 60J80}

\maketitle

% Thirdly, Salisbury and Verzani \cite{SV99} explicit the construction of polynomial type $h$-transform as Super Brownian motion conditioned to hit small balls, and Athreya and Salisbury \cite{AS11} tried to extend this construction to the stable case. In the same vein, Salisbury and Sezer found other $h$-transforms by conditioning on boundary statistics.
% Fourth, t

\section{Introduction}

Measure valued processes are usually defined as rescaled limit of particle systems.
At the limit, the particle picture is lost. It is nevertheless often useful to keep track of the particles in the limiting process. First attempts to do that were concerned with a single particle, the most persistent one: this generated the so called spinal decompositions of superprocesses, see Roelly and Rouault \cite{RR89}, Evans \cite{EV93} and Overbeck \cite{OV94}.
Second attempts deal with many particles, still the most persistent, as the infinite lineages of a supercritical superprocess, see Evans and O'Connell \cite{EO94}.
Most interesting is to keep track of all the particles; this can be achieved by the following trick: ordering the particles by persistence (and giving them a label called the ``level'' accordingly) allows one to keep a particle representation of the full system \textit{after taking the limit}; the measure valued process is then represented by (a multiple of) the de Finetti measure of the exchangeable sequence formed by the types of the particles. This program was realized by Donnelly and Kurtz \cite{DO99} with the construction of the lookdown particle system.

Our aim in this article is to explain that some transformations of the law of measure valued processes, which belong to the class of $h$-transforms, admit a simple interpretation when considered from the lookdown particle system point of view. 

We recall Doob $h$-transform refers to the following operation: given a transition kernel $p_{t}(x,dy)$ of a Markov process and a positive space time harmonic function $H(t,y)$ for this kernel, meaning that: $$\int H(t,y)\; p_{t}(x,dy)=H(0,x)$$ for every $x$ and $t$, a new transition kernel is defined by $p_{t}(x,dy) H(t,y)/ H(0,x)$, and the associated Markov process is called an $h$-transform.  Working with measure valued processes, we may choose $H(t,y)$ to be a linear functional of the measure $y$, in which case the $h$-transform is called additive. 
% Numerous works have been devoted to pathwise constructions of $h$-transforms of measure valued processes in the branching setting, see for instance the works of Salisbury and Verzani \cite{SV99} and Salisbury and Sezer \cite{SS09}.

Our contribution is the following: we observe that the Radon Nikodym derivative $H(t,y)/H(0,x)$ may be simply interpreted in term of the lookdown particle system in the following two cases: 
\begin{itemize}
 \item For a probability measure valued process on $\{1 \ldots K'\}$ called the Generalized Fleming Viot process without mutation, the choice $H(t,y)= \expp{r_K t} \prod_{i=1}^{K} y(\{i\})$ for   $1 \leq K \leq K'$, $y$ a probability measure on $\{1, \ldots, K'\}$ and $r_K$ a non negative constant chosen so that $H$ is harmonic, amounts to allocate the $K$ first types to the $K$ first particles. The corresponding $h$-transform is the process conditioned on coexistence of the $K$ first types in remote time, and the associated lookdown particle system is obtained by just ``forgetting'' some reproduction events in the original particle system. This way, we give a genealogy to the Wright Fisher diffusion conditioned on coexistence.
% , first investigated by Kimura \cite{KI57}.
% Also, we prove that conditioning on coexistence, we could also say non extinction, of some types amounts to remove reproduction events. On the contrary, conditioning a branching population on non extinction in remote time is equivalent to add immigration: see the conceptual proofs of Lyons, Pemantle and Peres \cite{LPP95} for illuminating interpretation.
We also take the opportunity to present an intertwining relationship for the Wright Fisher diffusion and explicit the associated pathwise decomposition. This adds another decomposition to the striking one of Swart, see \cite{SW11}.
\item For a more general measure valued process on a Polish space $E$ (incorporating mutation and non constant population size), the choice $H(t,y)= \int y(du) h(t,u)$ for a suitable function $h(t,u):\R^+ \times E \to \R^+$ of the underlying mutation process and $y$ a finite measure on $E$, amounts to force the first level particle to move like an $h$-transform of the underlying spatial motion (or mutation process), and to bias the total mass process.
This confirms a suggestion of Overbeck about the additive $h$-transform of Fleming Viot processes, see \cite{DA92} p. 183. 
%  When particularized to superprocesses (that is, measure valued processes satisfying the branching property), 
This also relates in the branching setting to decompositions of the additive $h$-transforms of superprocesses found by the same author \cite{OV94} using Palm measures. 
% We stress that the derivation of the genealogy of these $h$-transforms is really obvious with the lookdown particle system, which contrasts with the previous works on the genealogy of additive $h$-transforms. Moreover, the result also holds in the constant population setting, and
\end{itemize}
Our two examples, although similar, are independent: the first one may not be reduced to the second one, and vice versa.
We stress on the change of filtration technique, learnt in Hardy and Harris \cite{HH06}, which allows us to give simple proofs of the main results. 

We first recall in Section 2.1 the lookdown construction of \cite{DO99} in the case of the Generalized Fleming Viot process without mutation in finite state space. We look in Section 2.2 at the aforementioned product-type $h$-transform, and prove in Section 2.3 that it may be interpreted as the process conditioned on coexistence of some genetic types.
In Section 2.4, we compute the generator of the conditioned process in case the finite state space is composed of only two types, and recognize it as the generator of a Generalized Fleming Viot process with immigration. This Section also contains the statement and the interpretation of the intertwining relationship.
Section 3.1 starts with the introduction of a lookdown construction allowing for mutation and non constant population (also extracted from \cite{DO99}). We then present in Section 3.2 the additive $h$-transform of the associated measure valued process. Section 3.3 is concerned with applications in two classical cases: Dawson Watanabe processes and 
Generalized Fleming Viot processes.

\section{A product type $h$-transform}
\label{section2}
% We propose a simple construction of an $h$-transform of Generalized Fleming Viot (GFV in the following) processes in term of the underlying lookdown particle system. We also prove this $h$-transform corresponds to the law of the process conditioned to non extinction of certain types.

\subsection{The construction of the Generalized Fleming Viot Process without mutation.}

\label{spatial}

Donnelly and Kurtz  introduced  in \cite{DO99} a population model evolving in continuous time. We present in this Section the particular case of the Generalized Fleming Viot (GFV) process without mutation in which we are interested. A more general framework will be introduced in Section 3. 
% The individuals in the population are given a level in the set of integers $\N= \{1, 2, 3, \ldots \}$, and the individual at level $i$ carries the allelic type $X_t(i)$ which will be chosen in the finite space $E=\{1, 2, \ldots K\}$ 
% The types of the individuals are modified at reproduction events according to rules.

We denote by $\p_{\infty}$ the space of partitions of the set of integers $\N=\{1, 2, 3, \ldots\}$. 
We assume $c \geq 0$. We define $\mu^k$ the measure on $\p_{\infty}$ assigning mass one to partitions with a unique non trivial block consisting of two different integers, and call it the Kingman measure. We let $\nu$ be a measure on $(0,1]$ satisfying $\int_{(0,1]} x^2 \nu(dx) < \infty$. We denote by $dt$ the Lebesgue measure on $\R_{+}$, and by $\rho_{x}$ the law of the exchangeable partition of $\N$ with a unique non trivial block with asymptotic frequency $x$: If $(U_i)_{i \in \N}$ is a sequence of independent Bernoulli random variables with parameter $x$, then the partition $\pi$ whose unique non trivial block contains the integers $i$ such that $U_i=1$ has law $\rho_x$. Finally, we define $N(dt,d\pi)$ the Poisson point measure on $\R_{+}\times \p_{\infty}$ with intensity $$dt \times \mu(d\pi) \equal  dt \times \left(c \mu^k(d\pi)+ \int_{(0,1]} \nu(dx) \rho_{x}(d\pi)\right).$$

Let $R_0$ be a random probability measure on the finite state space $E= \{1, 2, \ldots, K' \}$ for $K' \geq 2$. Assume $R_0$ is independent of $N$. Conditionally on $(R_{0},N)$, we define the lookdown particle system $X=(X_t(n), t \geq 0, n \in \N)$, and we call $X_{t}(n)$ the type of the particle at level $n \in \N$ at time $t \geq 0$:
\begin{itemize}
\item The initial state $(X_0(n), n \in \N)$ is an exchangeable sequence valued in $E$ with de Finetti's measure $R_{0}$: That means that, conditionally on $R_0$, $(X_0(n), n \in \N)$ form a sequence of independent random variables with law $R_0$.
\item At each atom $(t,\pi)$ of $N$, we associate a \textit{reproduction event} as follows: let $j_1 < j_2< \ldots$ be the elements of the unique block of the partition $\pi$ which is not a singleton (either it is a doubleton or an infinite set). The individuals $j_1<j_2 < \ldots$ at time $t$ are declared to be the children of the individual $j_1$ at time $t-$, and receive the type of the parent $j_1$, whereas the types of all the other individuals are shifted upwards accordingly, keeping the order they had before the birth event: for each integer $\ell$, $X_t(j_\ell)=X_{t-}(j_1)$ and for each $k \notin \{j_{\ell},\ell \in \N\}$, $X_t(k)=X_{t-}(k-\# J_k)$ with $J_k \equal \{\ell >1, j_{\ell} \leq k\}$ and $\# J_k$ the cardinal of the set $J_k$. 
\item For each $n \in \N$, the type $X_{t}(n)$ of the particle at level $n$ do not evolve between reproduction events which affect level $n$.
\end{itemize}
\begin{rem}
\label{definition}
 The integrability condition $\int_{(0,1]} x^2 \nu(dx)<\infty$ ensures that finitely many reproduction events change the type of a particle at a given level in a finite time interval, ensuring the preceeding definition makes sense.
\end{rem}

For a fixed $t\geq 0$, the sequence $(X_t(n), n \in \N)$ is exchangeable according to Proposition 3.1 of \cite{DO99}. This allows to define the random probability measure $R_t$ on $E$ as the de Finetti measure of the sequence $(X_t(n), n \in \N)$:
\begin{equation}
\label{definetti}
R_t(dx) = \lim_{N \to \infty}   \frac{1}{N} \ \sum_{n=1}^{N} \delta_{X_t(n)} (dx)
\end{equation}
and this defines a random process indexed by the set of non negative real numbers $\R^+$ since a c\`adl\`ag version of the process (where the space $\M_{f}(E)$ of finite measures on $E$ is endowed with the topology of weak convergence) is shown to exist in \cite{DO99}.
The process $R$ is called the Generalized Fleming Viot process (GFV process for short) without mutation. 
We stress that, conditionally given $R_t$, the random variables $(X_t(n), n \in \N)$ on $E$ are independent and identically distributed according to the probability measure $R_t$ thanks to de Finetti's Theorem. \\
We will denote by $\P$ the law of $X$. We now introduce the relevant filtrations we will work with:
\begin{itemize}
 \item $\left( \F_t = \sigma \{(X_{s}(n),n \in \N) , 0 \leq s \leq t \} \right)$ corresponds to the filtration of the particle system. 
 \item $\left( \G_t= \sigma \{ R_s, 0 \leq s \leq t \} \right)$ corresponds to the filtration of the measure-valued process $R$.
\end{itemize}
Notice that $X$ is a Markov process with respect to the filtration $\F$, and  that $R$ is a Markov process with respect to the filtration $\G$.

\subsection{A pathwise construction of an $h$-transform}
\label{section1}

\subsubsection{Results}
The proofs of the results contained in this Subsection may be found in \ref{proofs}.
% We also have the following generalization of a theorem of Kimura \cite{KI57} (stated there in the Wright Fisher case, that is, for $\nu=0$).
Fix $1 \leq K \leq K'$. We assume from now on and until the end of Section \ref{section2} that:  
\begin{equation}
\label{notnull} \E(\prod_{i=1}^{K} R_0\{i\})>0, 
\end{equation}
to avoid empty definitions in the following. Recall the definition of the particle system $X$ associated with $R$ in Section \ref{spatial}. We define from $X$ a new particle system $X^h$ as follows:
\begin{itemize}
 \item[(i)] The finite sequence $\left( X_0^h(j), 1 \leq j \leq K \right)$ is a uniform permutation of $\{1, \ldots, K\}$, and, independently, the sequence $\left(X_{0}^h(j), j \geq K+1 \right)$ is exchangeable with asymptotic frequencies $R_{0}^{H}$, where $R^H_0$ is the random probability measure with law:
\begin{equation*}
 \P(R^H_0 \in A)= \E\left(\ind_{A}(R_0) \frac{\prod_{i=1}^{K} R_0\{i\}}{\E(\prod_{i=1}^{K} R_0\{i\})}\right). \end{equation*}
\item[(ii)] The reproduction events are given by the \textit{restriction} of the Poisson point measure $N$ (defined as in Subsection \ref{spatial}) to $V  \equal \left\{ (s,\pi), \pi_{| [K]}= \left\{ \{1\}, \{2\}, \ldots,\{ K \} \right\} \right\} $, where $\pi_{| [K]}$ is the restriction of the partition $\pi$ of $\N$ to $\{1,\ldots,K\}$, that is the atoms of $N$ for which the reproductions events do not involve more than one of the first $K$ levels.
\end{itemize}

Remark \ref{definition} ensures that this definition of the particle system $X^h$ makes sense.

\begin{rem}
Note that the particle system $\left( X_0^h(j), j \geq 1\right)$ is no more exchangeable due to the constraint on the $K$ initial levels. Nevertheless, the  particle system $\left( X_0^h(j), j >K \right)$ is still exchangeable, and we shall view the $K$ first levels as $K$ independent sources of immigration in forthcoming Section \ref{immigration}.
\end{rem}

We also need the definition of the lowest level $L(t)$ at which the first $K$ types appear:
\begin{equation}
\label{defL}
L(t)=\inf \lbrace i \geq K, \{1, \ldots, K\} \subset \{ X_t(1), \ldots, X_t(i)  \} \rbrace, 
\end{equation}
with the convention that $\inf \{ \varnothing \}  = \infty$. The random variable $L(0)$ is finite if and only if $\prod_{i=1}^{K} R_0\{i\}>0$, $\P$-a.s., thanks to de Finetti's Theorem. The process $L(t)$ is $\F_t$ measurable, but not $\G_t$ measurable. Notice the random variable $L(t)$ is an instance of the famous coupon collector problem, based here on a random probability measure.
We define, for $i \geq 1$, the \textit{pushing rates} $r_i$ at level $i$:
\begin{align*}
r_{i}&=\frac{i(i-1)}{2} \; c + \int_{(0,1]} \nu(dx) \left(1-(1-x)^i- ix(1-x)^{i-1}\right).
\end{align*} 
Notice that $r_1 =0$  and that $r_i$ is finite for every $i \geq 1$ since $\int_{(0,1]} x^2 \ \nu(dx)<\infty$. From the construction of the lookdown particle system, these pushing rates $r_i$ may be understood as the rate at which a type at level $i$ is pushed up to higher levels (not necessarily $i+1$) by reproduction events at lower levels.  
Let us define a process $Q= (Q_t, t \geq 0)$ as follows:
$$Q_t =\frac{\ind_{\{L(t)=K\}} }{\P(L(0)=K)} \expp{r_{K} t}.$$
\begin{lem}
\label{kimura_genealogy}
The process $Q= (Q_t, t \geq 0)$ is a non negative $\F$-martingale, and 
\begin{equation}
\forall A \in \F_t, \ \P(X^h \in A)= \E \left( \ind_{A}(X) \ Q_t \right).
\end{equation}
\end{lem}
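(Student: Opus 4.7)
The plan is to identify $Q_t$ as the product of two independent change-of-measure factors, one conditioning the initial configuration on $\{L(0)=K\}$ and one removing the Poisson atoms of $N$ outside the set $V$. The first task is to give a clean geometric description of the event $\{L(t)=K\}$ in terms of $N$. Inspecting the lookdown dynamics described in Section \ref{spatial}, I would check that an atom $(s,\pi)$ of $N$ leaves the first $K$ types unchanged if and only if $\pi_{|[K]}=\{\{1\},\ldots,\{K\}\}$, i.e.\ $(s,\pi)\in V$: when the non-trivial block of $\pi$ meets $[K]$ in at most one index, the shift formulas give $X_s(k)=X_{s-}(k)$ for every $k\le K$; when it meets $[K]$ in at least two indices, say $j_1<j_2$ with both in $[K]$, the two levels $j_1$ and $j_2$ get the common type $X_{s-}(j_1)$, breaking the permutation property. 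Because reproduction events only push types upward, a type that has left $[K]$ can never come back; hence
$$\{L(t)=K\}=\{L(0)=K\}\cap\{N([0,t]\times V^c)=0\}.$$

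Next I would compute $\mu(V^c)=r_K$. The Kingman component contributes the mass of partitions whose doubleton lies in $[K]$, namely $c\binom{K}{2}$, and the $\nu$-component contributes $\int_{(0,1]}(1-(1-x)^K-Kx(1-x)^{K-1})\nu(dx)$ since under $\rho_x$ each integer belongs to the non-trivial block independently with probability $x$. The sum is $r_K$. Combining with the independence of $X_0$ and $N$ under $\P$, the identity of Step~1 factorizes $Q_t$ as
$$Q_t=\frac{\ind_{\{L(0)=K\}}}{\P(L(0)=K)}\cdot\expp{r_K t}\,\ind_{\{N([0,t]\times V^c)=0\}}.$$
The first factor is $\F_0$-measurable with expectation one; the second is the classical exponential martingale of the Poisson process $t\mapsto N([0,t]\times V^c)$ killed at its first jump. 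Writing $Q_t/Q_s=\expp{r_K(t-s)}\ind_{\{N((s,t]\times V^c)=0\}}$ and using independence of increments gives $\E(Q_t\mid\F_s)=Q_s$, hence the martingale property.

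For the change-of-measure identity, set $\tilde\P$ by $d\tilde\P/d\P|_{\F_t}=Q_t$. By the product form and the independence $X_0\perp N$ under $\P$, under $\tilde\P$ the initial state $X_0$ remains independent of $N$, the law of $X_0$ is that of $X_0$ under $\P(\,\cdot\mid L(0)=K)$, and the restriction $N(\cdot\cap([0,t]\times V))$ keeps its original PPM law while $N$ has no atoms in $[0,t]\times V^c$ (standard thinning Girsanov identity for Poisson random measures). The conditional law of $X_0$ given $\{L(0)=K\}$ is identified via de Finetti: $\P(L(0)=K\mid R_0)=K!\prod_{i=1}^{K}R_0\{i\}$, so $R_0$ acquires the size-biased law $R_0^H$ and, given $R_0^H$, the first $K$ coordinates are uniformly distributed over permutations of $[K]$ and independent of the iid-$R_0^H$ tail. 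This is exactly (i), while the restriction of $N$ to $V$ is exactly the reproduction mechanism (ii). Therefore $X$ under $\tilde\P$ and $X^h$ under $\P$ have the same law on $\F_t$, which is the desired formula.

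The main obstacle is Step~1: making precise that atoms in $V$ never affect the first $K$ types, that atoms in $V^c$ always break $\{L=K\}$ when one starts from it, and that this break is irreversible. The remaining inputs—computing the rate $r_K$, invoking the exponential martingale of a Poisson process, and using de Finetti's theorem to describe the conditional law of $X_0$—are routine once this combinatorial analysis of the lookdown map is in place.
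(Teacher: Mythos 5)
Your proof is correct and follows essentially the same route as the paper's: the identity $\{L(t)=K\}=\{L(0)=K\}\cap\{N([0,t]\times V^c)=0\}$, the computation $\mu(V^c)=r_K$, and the factorization of $Q_t$ into two independent conditionings (the initial configuration on $\{L(0)=K\}$ via de Finetti, and the Poisson measure on having no atoms in $V^c$) are exactly the ingredients of the paper's argument. You in fact give more detail than the paper on the combinatorial step showing that atoms in $V$ leave the first $K$ levels untouched while atoms in $V^c$ irreversibly destroy $\{L=K\}$, which the paper asserts without proof.
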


By projection on the smaller filtration $\G_t$, we deduce Lemma \ref{martingale_mult}. We need the following definition of the process:
 $$M_t = \frac{\prod_{i=1}^{K} R_t\{i\}}{\E(\prod_{i=1}^{K} R_0\{i\})} \expp{r_{K} t}.$$
% Let $Z$ be a finite state space GFV process without mutation.
\begin{lem}
\label{martingale_mult}
The process $M=(M_t, t \geq 0)$ is a non negative $\G$-martingale.
\end{lem}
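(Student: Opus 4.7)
The plan is to deduce Lemma \ref{martingale_mult} from Lemma \ref{kimura_genealogy} by projecting the $\F$-martingale $Q$ onto the smaller filtration $\G$. Concretely, the plan is to show the identity $\E[Q_t \mid \G_t] = M_t$; then the $\G$-martingale property of $M$ follows from the tower property and the $\F$-martingale property of $Q$, since $\G_t \subset \F_t$ (the de Finetti limit $R_s$ is $\F_s$-measurable). Non-negativity is immediate from the definition.

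The main computational step is therefore to evaluate $\E[\ind_{\{L(t)=K\}} \mid \G_t]$. Here I would invoke the de Finetti property recalled after \eqref{definetti}: conditionally on $R_t$ (and a fortiori on $\G_t$), the types $(X_t(n), n \in \N)$ are i.i.d.\ with law $R_t$. The event $\{L(t) = K\}$ requires the first $K$ levels to exhibit all $K$ types $\{1, \ldots, K\}$; since there are exactly $K$ slots to be filled by $K$ distinct types, this forces $(X_t(1), \ldots, X_t(K))$ to be a permutation of $\{1, \ldots, K\}$. Summing over the $K!$ permutations and using the i.i.d.\ property gives
\begin{equation*}
\E[\ind_{\{L(t)=K\}} \mid \G_t] \;=\; K!\,\prod_{i=1}^{K} R_t\{i\}.
\end{equation*}
Applying the same identity at $t=0$ yields $\P(L(0)=K) = K!\,\E\bigl[\prod_{i=1}^{K} R_0\{i\}\bigr]$, which is strictly positive by assumption \eqref{notnull}.

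Plugging these two identities into the definition of $Q_t$ and using that $\expp{r_K t}$ is $\G_t$-measurable, the factors $K!$ cancel and one obtains $\E[Q_t \mid \G_t] = M_t$. Then for $s \leq t$,
\begin{equation*}
\E[M_t \mid \G_s] = \E\bigl[\E[Q_t \mid \F_t] \mid \G_s\bigr] = \E[Q_t \mid \G_s] = \E\bigl[\E[Q_t \mid \F_s] \mid \G_s\bigr] = \E[Q_s \mid \G_s] = M_s,
\end{equation*}
which is the desired martingale property.

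I do not foresee any real obstacle: the only subtlety is justifying carefully the combinatorial identity $\{L(t)=K\} = \{(X_t(1),\ldots,X_t(K)) \text{ is a permutation of } \{1,\ldots,K\}\}$, which follows directly from the definition \eqref{defL} once one observes that $K$ distinct types must occupy exactly $K$ slots, and the application of de Finetti's theorem to compute the conditional probability.
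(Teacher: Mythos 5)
Your proof is correct and follows essentially the same route as the paper: project the $\F$-martingale $Q$ onto $\G_t$, use de Finetti's theorem to compute $\E[\ind_{\{L(t)=K\}}\mid\G_t]=K!\prod_{i=1}^{K}R_t\{i\}$ (this is the paper's equation \reff{key}, established in the proof of Lemma \ref{kimura_genealogy}), and conclude via the tower property. The only cosmetic difference is that you spell out the tower-property chain explicitly, which the paper leaves as a one-line remark.
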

This fact allows to define the process $R^H= (R^H_t, t \geq 0)$ absolutely continuous with respect to $R= (R_t, t \geq 0)$ on each $\G_t$, $t \geq 0$, with Radon Nykodim derivative:
\begin{equation}
\label{kimuraeq}
\forall A \in \G_t, \ \P(R^H \in A)= \E\left(\ind_{A}(R) \ M_t \right).
\end{equation}
The process $R^H$ is the product type $h$-transform of interest.
\begin{rem}
Definition \reff{kimuraeq} agrees with the definition of $R_0^H$.
\end{rem}
\begin{rem}
Intuitively, the ponderation by $M$ favours the paths in which the $K$ first types are present in equal proportion.
\end{rem}
We shall deduce from Lemma \ref{kimura_genealogy} and Lemma \ref{martingale_mult} the following Theorem.
\begin{theo}
\label{kimura_restriction}
Let $1 \leq K \leq K'$. We have that:
\begin{itemize}
 \item[(a)] The limit of the empirical measure:
\begin{equation*}
R^h_t(dx) \equal \lim_{N\to \infty} \frac{1}{N} \sum_{n=1}^{N} \delta_{X_{t}^h(n)}(dx)
\end{equation*}
exists a.s.
\item[(b)] The process $(R^h_t, t \geq 0)$ is distributed as $(R^H_t, t \geq 0)$.
\end{itemize}
\end{theo}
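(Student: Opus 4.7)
The plan is to deduce the statement from Lemmas \ref{kimura_genealogy} and \ref{martingale_mult} via a projection from the lookdown filtration $\F$ onto the measure-valued filtration $\G$. Concretely, I would introduce on the canonical space of $X$ a probability measure $\P^h$ by setting $\P^h(A)=\E(\ind_A Q_t)$ for $A\in\F_t$; this is consistent in $t$ because $Q$ is a non-negative $\P$-martingale of mean one (Lemma \ref{kimura_genealogy}), and the same lemma guarantees that $X$ under $\P^h$ is distributed as $X^h$ under $\P$. Parts (a) and (b) then amount to transferring statements about $X_t$ and $R_t$ under $\P^h$ into statements about $X^h_t$ and $R^h_t$ under $\P$.

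For (a), I would fix $t\ge 0$ and recall that $(X_t(n),n\in\N)$ is $\P$-exchangeable by Proposition 3.1 of \cite{DO99}, so de Finetti's theorem gives that the empirical measure $\frac{1}{N}\sum_{n=1}^{N}\delta_{X_t(n)}$ converges $\P$-a.s.\ to $R_t$ as $N\to\infty$. Since $\P^h\ll\P$ on $\F_t$, the same convergence holds $\P^h$-a.s. Transferring this back to $X^h$ under $\P$ yields the a.s.\ existence of $R^h_t$, and at the same time identifies the law of $(R^h_s,0\le s\le t)$ under $\P$ with the law of $(R_s,0\le s\le t)$ under $\P^h$.

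For (b), I would fix $t\ge 0$ and $A\in\G_t\subset\F_t$, and combine the previous identification with the tower property to write
\[
\P(R^h\in A)=\E\bigl(\ind_A(R)\,Q_t\bigr)=\E\bigl(\ind_A(R)\,\E(Q_t\mid\G_t)\bigr).
\]
The key step is then the identity $\E(Q_t\mid\G_t)=M_t$. To establish it I would use that, conditionally on $R_t$, the variables $X_t(1),\ldots,X_t(K)$ are i.i.d.\ with law $R_t$ (de Finetti), so
\[
\P(L(t)=K\mid\G_t)=\P\bigl((X_t(1),\ldots,X_t(K))\text{ is a permutation of }\{1,\ldots,K\}\mid R_t\bigr)=K!\prod_{i=1}^{K}R_t\{i\}.
\]
Applied at $t=0$ this gives $\P(L(0)=K)=K!\,\E\bigl(\prod_{i=1}^{K}R_0\{i\}\bigr)$, strictly positive by \reff{notnull}. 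Plugging into the definition of $Q_t$ yields $\E(Q_t\mid\G_t)=M_t$, and \reff{kimuraeq} then produces $\P(R^h\in A)=\P(R^H\in A)$; letting $t$ vary covers all finite-dimensional distributions.

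I expect the main conceptual hurdle to be setting up the $\F$-to-$\G$ projection cleanly; once this is in place, the only non-trivial ingredient is the coupon-collector identity $\P(L(t)=K\mid\G_t)=K!\prod_{i=1}^{K}R_t\{i\}$, which is elementary given the conditional i.i.d.\ structure of the lookdown system along the de Finetti measure.
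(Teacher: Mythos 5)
Your proposal is correct and follows essentially the same route as the paper: part (a) is obtained by transferring the $\P$-a.s.\ convergence of the empirical measure (de Finetti, as in \reff{definetti}) through the absolute continuity $\P(X^h\in\cdot)=\E(\ind_{\cdot}(X)Q_t)$ of Lemma \ref{kimura_genealogy}, and part (b) by projecting onto $\G_t$ via $\E(Q_t\mid\G_t)=M_t$ and invoking \reff{kimuraeq}. The only cosmetic difference is that you re-derive the identity $\E(Q_t\mid\G_t)=M_t$ inline, whereas the paper has already isolated it as Lemma \ref{martingale_mult} (with the coupon-collector formula \reff{key} proved there).
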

Let us comment on these results.
The process $X^h$ is constructed by changing the initial condition and forgetting (as soon as $K\geq 2$) specific reproduction events in the lookdown particle system of $X$. Lemma \ref{kimura_genealogy} tells us that this procedure selects the configurations of $X$ in which the $K$ lowest levels are filled with the $K$ first types at initial time without any ``interaction'' between these $K$ first levels at a further time. Theorem \ref{kimura_restriction} tells us that the process $R^h$ constructed in this way is an $h$-transform of $R$ and Lemma \ref{martingale_mult} yields the following simple probabilistic interpretation of the Radon Nikodym derivative in equation \reff{kimuraeq}: the numerator is proportional to the probability that the $K$ lowest levels are occupied by the $K$ first types at time $t$, whereas the denominator is proportional to the probability that the $K$ lowest levels are occupied by the $K$ first types at time $0$. We shall see in Section \ref{conditioned} that the processes $X^h$ and $R^h$ also arise by conditioning the processes $X$ and $R$ on coexistence of the first $K$ types.

\subsubsection{Proofs}
\label{proofs}

\begin{proof}[Proof of Lemma \ref{kimura_genealogy}]
From the de Finetti theorem, conditionally on $R_t$, the random variables $(X_{t}(i), i \in \N)$ are independent and identically distributed according to $R_t$. This implies that: 
\begin{equation}
\label{key}
\P(L(t)=K |\G_t )=K! \; \prod_{i=1}^{K} R_t\{i\}. 
\end{equation}
In particular, we have:
\begin{equation*}
\P(L(0)=K)=K! \; \E(\prod_{i=1}^{K} R_0\{i\}), 
\end{equation*}
which, together with \reff{notnull}, ensures that $Q_t$ is well defined.

Then, let us define $W=\lbrace \pi, \pi_{| [K]} = \{ \{1\}, \{2\}, \ldots,\{ K \} \} \rbrace$, and $V_t = \lbrace (s,\pi), 0 \leq s \leq t, \pi \in W \rbrace $, and also the set difference $W^c= \p_{\infty} \setminus W$ and $V_t^c = \lbrace (s,\pi), 0 \leq s \leq t, \pi \in W^c \rbrace$. 
We observe that:
\begin{itemize}
\item from the de Finetti Theorem, the law of $X_0^h$, as defined in (i), is that of $X_0$ conditioned on $\{L(0)=K\}$. 
\item The law of the restriction of a Poisson point measure on a given subset is that of a Poisson point measure conditioned on having no atoms outside this subset: thus $N$ conditioned on having no atoms in $V_t^c$ (this event has positive probability) is the restriction of $N$ to $V_t$.
\end{itemize}
Since the two conditionings are independent, we have, for $A \in \F_t$:
\begin{align}
\P(X^h \in A) &= \P ( X \in A | \{L(0)=K\} \cap  \{N(V_t^c)=0\} ) \nonumber\\
&= \E \bigg(\ind_{A}(X) \frac{\ind_{\{L(0)=K\} \cap \{N(V_t^c)=0\} }}{\P(L(0)=K) \P(N(V_t^c)=0)} \bigg)  \label{3} 
\end{align}
We compute: 
\begin{align*}
\mu(W^c) &=\mu^k(W^c)+\int_{(0,1]} \nu(dx) \rho_{x}(W^c) \\
&=\frac{K(K-1)}{2} \; c \; + \; \int_{(0,1]} \nu(dx) \left(1-(1-x)^K- Kx(1-x)^{K-1}\right)\\
&=r_K. 
\end{align*}
This implies from the construction of $N$ that:
\begin{equation}
\label{3'}
\P(N(V_t^c)=0)= \expp{-\mu(W^c) t}=e^{-r_K t}. 
\end{equation}
Notice that 
\begin{equation}
\label{3''}
\{L(t)=K\} =\{L(0)=K\} \cap  \{N(V_t^c)=0\}.
\end{equation}
From \reff{3}, \reff{3'} and \reff{3''}, we deduce that:
\begin{align*}
\P(X^h \in A) &=
 \E \bigg(\ind_{A}(X) \frac{\ind_{\{L(t)=K \}}}{\P(L(0)=K)} \expp{r_K t}\bigg) =  \E \bigg(\ind_{A}(X) Q_t\bigg) .
\end{align*}
\end{proof}

\begin{proof}[Proof of Lemma \ref{martingale_mult}]
We know from Lemma \ref{kimura_genealogy} that $(Q_t, t \geq 0)$ is a $\F$-martingale. Since $\G_t \subset \F_t$ for every $t\geq 0$,
we deduce that $(\E(Q_t|\G_t), t \geq 0)$ is a $\G$-martingale. 
Now, we notice that:
\begin{align*}
\E(Q_t|\G_t) &= \E\left(\frac{\ind_{\{L(t)=K \}}}{\P(L(0)=K)} \expp{r_K t}| \G_t \right) = \frac{\prod_{i=1}^{K}  R_t\{i\}}{\E(\prod_{i=1}^{K} R_0\{i\})} \expp{r_K t} =M_t.
\end{align*}
using \reff{key} for the second equality.

\end{proof}

\begin{proof}[Proof of Theorem \ref{kimura_restriction}]
From Lemma \ref{kimura_genealogy}, $X^h$ is absolutely continuous with respect to $X$ on $\F_t$. The existence of the almost sure limit of the empirical measure claimed in point (a) follows from \reff{definetti}.
% (but not the exchangeability of the sequence) 
We now project on $\G_t$ the absolute continuity relationship on $\F_t$ given in Lemma $\ref{martingale_mult}$. Let $A \in \G_t$:
\begin{align*}
\P(R^h \in A)= \E \left(\ind_{A}(R) Q_t \right)  &= \E \left(\ind_{A}(R) \E(Q_t|\G_t) \right) = \E \left(\ind_{A}(R) M_t \right) =  \P(R^H \in A), 
\end{align*}
where we use Lemma \ref{kimura_genealogy} for the first equality and the definition of $R^H$ for the last equality. This proves point (b).
\end{proof}
% Note that, by taking $t=0$, we recover the formula for the law of $R_0^h$, so that our notation for $R_0^h$ is consistent.

\subsection{The $h$-transform as a conditioned process}
\label{conditioned}
% $h$-transforms generally correspond to conditioned processes. 
We study the conditionings associated  with the $h$-transforms.

Let  $1 \leq K \leq K'$. Assumption \reff{notnull} allows us to define a family of processes $R^{(\geq t)}$ on $\F$ by:
\begin{equation*}
\forall A \in \G_t, \  \P(R^{(\geq t)} \in A) = \P \big(R \in A | \prod_{i=1}^{K} R_{t}\{i\} \neq 0 \big),
\end{equation*}
and the associated particle system $X^{(\geq t)}$ on $\G$ by:
\begin{equation*}
\forall A \in \F_t, \  \P(X^{(\geq t)} \in A)=  \P \big(X \in A | \prod_{i=1}^{K} R_{t}\{i\} \neq 0 \big).
\end{equation*}
The process $R^{(\geq t)}$ thus corresponds to the process $R$ conditioned on coexistence of each of the \textit{first} $K$ types at time $t$. 
It is not easy to derive the probabilistic structure of the particle system $X^{(\geq t)}$ on all $\F_t$. Nevertheless, for fixed $s\geq 0$, the probabilistic structure of $X^{(\geq t)}$ on the sigma algebra $\F_s$ simplifies as $t$ goes to infinity.

The following Theorem may be seen as a generalization of Theorem 3.7.1.1 of Lambert \cite{L08} which, building on the work of Kimura \cite{KI57}, deals with the case $\nu=0$. We write $\P_i$ for the law of $L$ (defined in \reff{defL}) conditionally on  $\{L(0)=i\}$. For $I$ an interval of $\R^+$ and $F$ a process indexed by $\R^+$, we denote by $F_{I}$ the restriction of $F$ on the interval $I$.
\begin{theo}
\label{kimura_conditioned}
Let $s \geq 0$ be fixed. Assume that 
\begin{equation}
\label{condition} \lim_{t \to \infty} \frac{\P_{K+1}(L(t)<\infty)}{\P_{K}(L(t)<\infty)}=0. 
\end{equation}
We have:
\begin{itemize}
\item[(i)] The family of processes $X_{[0,s]}^{(\geq t)}$ weakly converges as $t \to \infty$ towards the process $X^h_{[0,s]}$.
\item[(ii)] The family of processes $R_{[0,s]}^{(\geq t)}$ weakly converges as $t \to \infty$ towards the process $R^h_{[0,s]}$.
\end{itemize}
\end{theo}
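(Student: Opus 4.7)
The plan is to reduce Theorem \ref{kimura_conditioned} to a single ratio-of-probabilities estimate that can be extracted from hypothesis \reff{condition}. The starting observation is that, by de Finetti's theorem, $\{L(t) < \infty\} = \{\prod_{i=1}^K R_t\{i\} > 0\}$, so $X^{(\geq t)}$ and $R^{(\geq t)}$ are literally $X$ and $R$ conditioned on $\{L(t)<\infty\}$. Applying Lemma \ref{kimura_genealogy} with $F \equiv 1$ yields the normalization $\P(L(t) = K) = \P(L(0)=K)\, \expp{-r_K t}$, so for any bounded $\F_s$-measurable $F$ and any $t \geq s$,
\[
\E[F(X^h_{[0,s]})] \;=\; \E[F(X_{[0,s]}) \, Q_t] \;=\; \E[F(X_{[0,s]}) \mid L(t) = K].
\]
The lookdown dynamics make $t \mapsto L(t)$ pathwise non-decreasing, so $\{L(t)=K\} \subset \{L(s)=K\}$; and because an atom of $N$ ``mixes'' the first $K$ levels at the state-independent rate $r_K$, the Markov property gives $\P(L(t)=K \mid \F_s) = \expp{-r_K(t-s)} \ind_{\{L(s)=K\}}$. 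Therefore $\E[F \mid L(t) = K] = \E[F \mid L(s) = K]$ is independent of $t$, which is the target quantity for part (i).

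Given this, the disjoint decomposition $\{L(t)<\infty\} = \{L(t)=K\} \sqcup \{K < L(t) < \infty\}$ yields
\[
\E[F \mid L(t) < \infty] \;=\; \alpha_t\, \E[F \mid L(s) = K] \;+\; (1-\alpha_t)\, \E[F \mid K < L(t) < \infty],
\]
with $\alpha_t = \P(L(t)=K)/\P(L(t)<\infty)$. Since $|\E[F \mid K < L(t) < \infty]| \leq \|F\|_\infty$, part (i) reduces to showing $\alpha_t \to 1$, i.e.\ $\P(K < L(t) < \infty) = o\bigl(\P(L(0)=K)\, \expp{-r_K t}\bigr)$. I would decompose this probability according to the value of $L(0)$. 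For the terms with $L(0) \geq K+1$, a coupling argument using a common Poisson point measure $N$ shows that $i \mapsto \P_i(L(t)<\infty)$ is non-increasing (a type sitting at a lower level has strictly fewer reproduction events that can overwrite it), so their total contribution is at most $\P(L(0) \geq K+1)\, \P_{K+1}(L(t)<\infty)$, which is $o(\expp{-r_K t})$ upon combining \reff{condition} with the trivial lower bound $\P_K(L(t)<\infty) \geq \expp{-r_K t}$. For the term with $L(0)=K$, strong Markov applied at the first mixing time $\tau \sim \mathrm{Exp}(r_K)$, together with the same monotonicity, gives
\[
\P_K(K < L(t) < \infty) \;\leq\; r_K \int_0^t \expp{-r_K u} \, \P_{K+1}(L(t-u)<\infty) \, du,
\]
which, after the change of variable $v = t-u$ and a Gronwall-type iteration using \reff{condition}, also yields an $o(\expp{-r_K t})$ estimate.

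Part (ii) then follows from part (i) by projection: the event $\{L(t)<\infty\}$ is $\G_t$-measurable, so the preceding argument applied to $F$ depending only on $R_{[0,s]}$, together with Lemma \ref{martingale_mult}, transfers the convergence from the particle system to the measure-valued process. The main obstacle will be the last estimate: the naive monotonicity bound on $\P_K(K<L(t)<\infty)/\expp{-r_K t}$ only yields $O(1)$ in $t$, so a careful iterative or Gronwall-type improvement is needed to turn the ratio hypothesis \reff{condition} into the required $o(1)$ decay and close the loop.
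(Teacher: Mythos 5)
Your opening steps are sound and match the paper's (the identity $\E[F(X^h_{[0,s]})]=\E[F\mid L(s)=K]=\E[F\mid L(t)=K]$, and the coupling giving $\P_{\ell+1}(L(t)<\infty)\leq \P_{\ell}(L(t)<\infty)$), but the reduction of part (i) to ``$\alpha_t\to 1$'' is a reduction to a \emph{false} statement, so the strategy cannot close. Take $K\geq 2$, $\nu=0$, $c>0$ (Kingman case, where \reff{condition} holds by the Corollary). Starting from $L=K$, the first mixing event sends $L$ to exactly $K+1$ at rate $r_K>0$, and staying at $K+1$ up to time $t$ keeps $L(t)<\infty$; hence
\begin{equation*}
\P_K(K<L(t)<\infty)\;\geq\;\int_0^t r_K\,\expp{-r_Ku}\,\expp{-r_{K+1}(t-u)}\,du\;=\;\frac{r_K}{r_{K+1}-r_K}\left(\expp{-r_Kt}-\expp{-r_{K+1}t}\right),
\end{equation*}
which is of the \emph{same} order $\expp{-r_Kt}$ as $\P(L(t)=K)$. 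Since $\P(L(0)=K)>0$ under \reff{notnull}, your target estimate $\P(K<L(t)<\infty)=o(\expp{-r_Kt})$ fails and $\limsup_t\alpha_t<1$; your closing remark that the naive bound only gives $O(1)$ is not an artifact to be fixed by a Gronwall iteration --- it is the truth. The conceptual error is that conditioning on $\{L(t)<\infty\}$ does \emph{not} asymptotically force $L(t)=K$; it only forces $L(s)=K$ at the fixed earlier time $s$. The event $\{K<L(t)<\infty\}$ carries a non-vanishing fraction of the conditional mass, and on it the law of $X_{[0,s]}$ \emph{also} converges to that of $X^h_{[0,s]}$ (those paths still have $L(s)=K$ with conditional probability tending to one); by discarding it with the crude $\|F\|_\infty$ bound you throw away exactly the contribution you need.

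The paper's proof avoids this by applying the Markov property at the fixed time $s$ and normalizing by $\P_K(L(t-s)<\infty)$ rather than by $\expp{-r_Kt}$: it writes
\begin{equation*}
\P(A\cap\{L(t)<\infty\})=\E\big(\ind_{A\cap\{L(s)=K\}}\,\P_K(\tilde L(t-s)<\infty)\big)+\E\big(\ind_{A\cap\{L(s)\geq K+1\}}\,\P_{L(s)}(\tilde L(t-s)<\infty)\big),
\end{equation*}
bounds the second term by $\P(A\cap\{L(s)\geq K+1\})\,\P_{K+1}(L(t-s)<\infty)$ via your monotonicity, divides by $\P_K(L(t-s)<\infty)$ so that hypothesis \reff{condition} kills the second term exactly as calibrated, and then takes the ratio with the case $A=\Omega$ to get $\P(A\cap\{L(s)=K\})/\P(L(s)=K)=\E(\ind_A Q_s)=\P(X^h\in A)$. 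If you wish to keep your formulation, the fix is to replace your normalizer $\P(L(t)=K)$ by $\P(L(s)=K)\,\P_K(L(t-s)<\infty)$ throughout; the two differ in the limit by a constant factor strictly larger than $1$, which is precisely the discrepancy that breaks your argument.
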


\noindent Lemma \ref{c} gives a sufficient condition for \reff{condition} to be satisfied.

\begin{rem}
 The case $K=1$ corresponds to a non degenerate conditioning since the event $\{R_{t}\{1\} \neq 0 \mbox{ for every } t\}$ has positive probability under \reff{notnull}.
\end{rem}

\begin{rem}
\label{CDIrem}
Assume $K \geq 2$. The following property
$$\mbox{(CDI)  \ \ \ \ } \P(\inf{\{t>0, L(t)=\infty \}} < \infty)=1,$$
is independent of $K$ (used to define $L$ in \reff{defL}).
(CDI) property is easily seen to correspond to the Coming Down from Infinity property for the $\Lambda$-coalescent associated with the GFV process $R$ (whence the acronym (CDI)). We refer to Schweinsberg \cite{SC01} for more details about this property.
We conjecture (CDI) property is equivalent to our assumption \reff{condition}. We were unable to prove it. 
\end{rem}

\begin{rem}
It should still be possible to interpret the processes $X^h$ and $R^h$ as conditioned processes, without assuming \reff{condition}. In that more general case, $X^h$ should correspond to $X$ conditioned by the event $\{ \liminf_{t \to \infty} \prod_{i=1}^{K} R_{t}\{i\} >0 \}$ (which has null probability as soon as $K \geq 2$).
\end{rem}

\begin{proof}
First observation is that, from the Kingman's paintbox construction for exchangeable random partition, we have: $\prod_{i=1}^{K} R_t\{i\} \neq 0$  if and only if  $L(t)<\infty$, $\P$ a.s.
This gives, for any $A \in \F_s$:
\begin{align*}
\P \bigg( A | \prod_{i=1}^{K} R_t\{i\} \neq 0 \bigg)
&= \frac{\P \left(A \cap \{\prod_{i=1}^{K} R_t\{i\} \neq 0\} \right)} {\P(\prod_{i=1}^{K} R_t\{i\} \neq 0)} \\
&= \frac{\P \left(A \cap \{L(t)<\infty\} \right)} {\P(L(t)<\infty)}. \\
\end{align*}
Now, using the Markov property, we have:
\begin{align*}
 \P(A \cap \{ L(t) < \infty\} ) 
&= \P(A  \cap \{ L(s)=K \} \cap \{ L(t) < \infty\} ) +  \P(A \cap \{ L(s)\geq K+1 \} \cap \{ L(t) < \infty\} ) \\
&= \E (\ind_{A \cap \{ L(s)=K \}} \;  \P_{K}(\tilde{L}(t-s) < \infty )) +  \E (\ind_{A \cap \{ L(s) \geq K+1 \}} \; \P_{L(s)}(\tilde{L}(t-s) < \infty )). \\ 
\end{align*}
where $\tilde{L}$ is an independent copy of $L$. 

Let $\ell \in \N$. We can couple the processes $L$ under $\P_{\ell}$ and $L$ under $\P_{\ell+1}$ on the same lookdown graph  by using the same reproduction events. 
More precisely, imagine that we distinguish the particles at level $\ell$ and $\ell+1$ at initial time, giving each of them a special type shared by no other particles. Then the first levels at which these two types may be found at time $t$ yields a coupling of $L(t)$ under $\P_{\ell}$ and $L(t)$ under $\P_{\ell+1}$.
Let us denote by $(L_\ell,L_{\ell+1})$ this coupling: $L_\ell$ is distributed as $L$ under $\P_\ell$ and $L_{\ell+1}$ is distributed as $L$ under $\P_{\ell+1}$. By the ordering by persistence property of the lookdown graph, we have that, for every $t\geq0$: $$L_\ell(t) \leq L_{\ell+1}(t),$$ whence:
\begin{equation}
\label{persistence} \P_{\ell+1}(L(t)<\infty) \leq \P_{\ell}(L(t)<\infty)
\end{equation}
for every integer $\ell$.
Therefore, we have:
\begin{align*}
\E (\ind_{A \cap \{ L(s) \geq K+1 \}} \P_{L(s)}(\tilde{L}(t-s) < \infty )) & \leq  \P (A \cap \{ L(s) \geq K+1 \}) \;  \P_{K+1}(L(t-s) < \infty ).
\end{align*}
Our assumption \reff{condition} now implies:
\begin{align*}
\frac{\P(A \cap   \{ L(t)<\infty \} )}{\P_{K}(L(t-s) < \infty)}
&\underset{t \to \infty}{\rightarrow}  \P (A \cap \{ L(s)=K \}).
\end{align*}
Setting $A=\Omega$, this also yields:
\begin{align*}
\frac{\P( L(t)<\infty )}{\P_{K}(L(t-s) < \infty)}
&\underset{t \to \infty}{\rightarrow}  \P ( L(s)=K).
\end{align*}
Taking the ratio, we find that:
\begin{align*}
\frac{\P \left(A \cap \{L(t)<\infty\} \right)} {\P(L(t)<\infty)} 
 \underset{t \to \infty}{\rightarrow}  \frac{\P (A \cap \{ L(s)=K \})}{\P(L(s)=K)}.
\end{align*}
We also have that $\P(L(s)=K)=\P(L(0)=K) \expp{-r_K s}$ since $Q$ is a $\G$-martingale from Lemma \ref{kimura_genealogy}. Altogether, we find that:
\begin{align*}
\lim_{t \to \infty} \P(A  |  \prod_{i=1}^{K} R_t\{i\} \neq 0 \})= \E\left(\ind_{A}(X) \frac{\ind_{\{L(s)=K\}}}{\P(L(0)=K)} \expp{r_K s}\right)=\P(X^h\in A)
\end{align*}
where the last equality corresponds to Lemma \ref{kimura_genealogy}. This implies the convergence in law of $X^{(\geq t)}$ towards $X^h$ as $t \to \infty$. This proves the first point.\\
The proof of (ii) is similar to the one for (i).
\end{proof}

\begin{rem}
Having introduced in the preceeding proof the coupling $(L_{K},L_{K+1})$, we may complete the Remark \ref{CDIrem}: 
It is possible to prove that, if (CDI) holds and for each $t \geq 0$,  $$ \left( j \to \P(L_{K+1}(t)< \infty|L_{K}(t) \leq j) \right) \mbox{ is non increasing}$$ 
then \reff{condition} holds. 
%The monotonicity assumption is made plausible by the ordering by persistence property of the lookdown graph, see \reff{persistence}.
\end{rem}

\noindent We now give a sufficient condition for \reff{condition} to be satisfied.

\begin{lem}
\label{c}
If $\sum_{j \geq K} \frac{1}{r_j}<\infty$, then \reff{condition} holds.
\end{lem}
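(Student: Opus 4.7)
The plan is to combine a lower bound on $\P_K(L(t)<\infty)$ with an exponential upper bound on $\P_{K+1}(L(t)<\infty)$ of strictly larger decay rate. The lower bound will come for free: the identity $\{L(t)=K\}=\{L(0)=K\}\cap\{N(V_t^c)=0\}$ established in the proof of Lemma \ref{kimura_genealogy} yields
$$\P_K(L(t)<\infty)\geq \P_K(L(t)=K)=e^{-r_K t}.$$
The real work is to show that $\P_{K+1}(L(t)<\infty)=o(e^{-r_K t})$.

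For the upper bound I will single out the (almost surely unique) individual $P$ of the lookdown sitting at level $K+1$ at time $0$ and track its level $Z(t)$, non-decreasing in $t$ with $Z(0)=K+1$. Under $\P_{K+1}$ exactly one of the first $K$ types, say $k_0$, is absent from the first $K$ levels at time $0$, and $P$ is the unique individual carrying type $k_0$ at a level $\leq K+1$; every other individual at an initial level $\leq K$ starts strictly below $P$. The order-preserving (persistence) property of the lookdown propagates this inequality to every $t$, so that the first appearance of $k_0$ at time $t$ is exactly $Z(t)$, while the first appearances of the other types in $\{1,\ldots,K\}$ remain at levels $<Z(t)$. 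This gives the key identity $L(t)=Z(t)$ under $\P_{K+1}$, and in particular $\P_{K+1}(L(t)<\infty)=\P(Z(t)<\infty)$.

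Next, the individual at level $j$ is displaced precisely by those reproduction events whose block contains at least two elements of $\{1,\ldots,j\}$, and these arrive at rate $r_j$ by the very definition of $r_j$; so $Z$ is a pure-jump Markov chain on $\{K+1,K+2,\ldots\}\cup\{\infty\}$ with jump rate $r_j$ from level $j$. Coupling the holding times with an independent family $\tau_j\sim\mathrm{Exp}(r_j)$ gives
$$T_{\infty}\equal\inf\{t:Z(t)=\infty\}\;\leq\; S\equal\sum_{j\geq K+1}\tau_j,$$
and for every $\lambda<r_{K+1}$ the moment generating function $\E[e^{\lambda S}]=\prod_{j\geq K+1}(1-\lambda/r_j)^{-1}$ is finite, since $\log(1-\lambda/r_j)^{-1}\sim \lambda/r_j$ is summable by hypothesis. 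Markov's inequality then gives $\P(Z(t)<\infty)\leq C(\lambda)\,e^{-\lambda t}$.

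To conclude I will observe that the hypothesis rules out the only situation in which the non-decreasing sequence $(r_j)$ could fail to be strictly increasing at $K$, namely $c=0$ together with $\nu$ concentrated at $\{1\}$ (in which case $r_j$ is constant and the series diverges). Therefore $r_{K+1}>r_K$ and the interval $(r_K,r_{K+1})$ is non-empty; choosing $\lambda$ in it,
$$\frac{\P_{K+1}(L(t)<\infty)}{\P_K(L(t)<\infty)}\leq \frac{C(\lambda)e^{-\lambda t}}{e^{-r_K t}}=C(\lambda)e^{-(\lambda-r_K)t}\underset{t\to\infty}{\to}0,$$
which is exactly \reff{condition}. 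The step on which I would spend the most care is the identification $L(t)=Z(t)$ under $\P_{K+1}$, jointly with the assertion that $Z$ is Markov with jump rate exactly $r_j$ from $j$; both follow from the lookdown bookkeeping rules recalled in Section \ref{spatial}, but deserve a detailed verification.
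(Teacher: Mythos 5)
Your proof is correct and follows essentially the same route as the paper's: the lower bound $\P_K(L(t)<\infty)\geq e^{-r_K t}$, stochastic domination of the successive holding times of $L$ under $\P_{K+1}$ by independent exponentials with parameters $r_j$ (via $r_{L^j}\geq r_j$ since the jumps are at least one), and a Chernoff bound at a rate $\lambda$ strictly between $r_K$ and $r_{K+1}$. The only differences are that you justify by a tagged-particle argument the fact, which the paper simply asserts, that under $\P_{K+1}$ the process $L$ is a pure-jump Markov chain jumping at rate $r_j$ from state $j$, and that you explicitly verify that the summability hypothesis forces $r_{K+1}>r_K$, a point the paper leaves implicit.
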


\begin{proof}
A lower bound for $\P_K(L(t) < \infty)$ is easily found:
\begin{equation}
\label{lowerbound}
\expp{-r_K t} = \P_K(L(t) =K) \leq \P_K(L(t) < \infty). 
\end{equation}
We now look for an upper bound for $\P_{K+1}(L(t) < \infty)$. Recall the non decreasing pure jump process $L$ jumps with intensity $r_j$ when $L=j$. % 

We may write, under $\P_{K+1}$:
$$\sup{\{t, L(t)<\infty\}}=\sum_{j \geq {K+1}} \tilde{T}_j$$
where, conditionally given the range of the function $L$ we shall denote by $L^{K+1}<L^{K+2}<\ldots$, the sequence $(\tilde{T}_j, j \geq K+1)$ is a sequence of independent exponential random variable with parameter $r_{L^j}$. Since $(r_j)_{j \geq K+1}$ forms an increasing sequence and the function $L$ has jumps greater or equal to one, we have for each $j \geq K+1$, 
\begin{equation}
\label{ineq}
r_{L^j} \geq r_j. 
\end{equation}
Let $(T_j, j \geq K+1)$ be a sequence of independent exponential random variables with parameter $(r_j, j \geq K)$.
We compute, for $0  < \lambda < r_{K+1}$ :
\begin{align*}
\P_{K+1}(L(t) < \infty) & = \P(\sum_{j \geq {K+1}} \tilde{T}_j > t) \\
&\leq  \P(\sum_{j \geq {K+1}} T_j > t) \\
&= \P(\exp{ (\lambda \sum_{j \geq {K+1}} T_j)} > \exp{ \left(\lambda t\right) } ) \\
& \leq \exp{ \left(-\lambda t \right)} \; \E \big(  \exp{\big( \lambda \sum_{j \geq K+1} T_j \big) }  \big) \\
& =\exp{ \left(-\lambda t\right) } \prod_{j \geq K+1} \frac{r_j}{r_j-\lambda} \\
& =  \exp{ \big(-\lambda t +\sum_{j \geq K+1} \log\big(1+\frac{\lambda}{r_j-\lambda}\big) \big) } \\
& \leq   \exp{\big( -\lambda t + \lambda  \sum_{j \geq K+1}  \frac{1}{r_j-\lambda} \big) }, \\
\end{align*}
where we use \reff{ineq} for the first inequality and the Markov inequality for the second inequality.
From the assumption, $\sum_{j \geq K+1} 1/r_j$ is finite, which implies also  that $\sum_{j \geq K+1}  1/(r_j-\lambda)$ is finite. Taking $\lambda= (r_K+r_{K+1})/2$, we obtain that:
\begin{align}
\label{upperbound}
\P_{K+1}(L(t) < \infty) < C \exp{ \left( - \frac{r_K+r_{K+1}}{2} t \right)}
\end{align}
for the finite constant $C=\exp{ \lambda   \sum_{j \geq K}  1/(r_j-\lambda)}$ associated with this choice of $\lambda$.
Altogether, using \reff{lowerbound} and \reff{upperbound}, we have that:
\begin{align*}
0\leq  \frac{\P_{K+1}(L(t)<\infty)}{\P_{K}(L(t)<\infty)} \leq   C \exp{\left(- \frac{r_{K+1}-r_{K}}{2} t \right)}.
\end{align*}
Letting $t$ tend to $\infty$, we get the required limit.
\end{proof}
As an immediate corollary, we get the following result, which ensures that \reff{condition} is satisfied in the most interesting cases. 
\begin{cor}
If $(c\neq 0)$, or $\big(c=0$ and $\nu(dx) =  x^{-1-\alpha} (1-x)^{\alpha -1} \ind_{(0,1)}(x) dx$ for some $1 <\alpha < 2 \big)$, then \reff{condition} holds.
\end{cor}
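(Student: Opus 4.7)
The plan is to reduce everything to Lemma \ref{c} and estimate the pushing rates $r_j$ in the two cases separately.

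\textbf{Reduction.} By Lemma \ref{c}, it suffices to show that $\sum_{j \geq K} 1/r_j < \infty$ under either hypothesis. Recall
\[
r_i = \frac{i(i-1)}{2}\, c + \int_{(0,1]} \nu(dx) \left(1-(1-x)^i - ix(1-x)^{i-1}\right).
\]
Both summands are non-negative, so any lower bound on either one will be enough.

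\textbf{Case $c \neq 0$.} Here $r_i \geq c\, i(i-1)/2$, which gives $\sum_{j \geq K} 1/r_j \leq (2/c) \sum_{j \geq K} 1/(j(j-1)) < \infty$. This case is immediate.

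\textbf{Case $c=0$, $\nu(dx) = x^{-1-\alpha}(1-x)^{\alpha-1}\ind_{(0,1)}(x)\,dx$, $1<\alpha<2$.} Here the plan is to establish the asymptotic $r_i \asymp i^{\alpha}$ as $i \to \infty$, from which $\sum_j 1/r_j < \infty$ follows since $\alpha>1$. I would perform the change of variables $x=u/i$ in the $\nu$-integral, yielding
\[
r_i = i^{\alpha} \int_{0}^{i} \left(1 - (1-u/i)^{i} - u(1-u/i)^{i-1}\right) u^{-1-\alpha}\,(1-u/i)^{\alpha-1}\,\frac{du}{1}.
\]
As $i\to\infty$, the integrand converges pointwise to $g(u) \equal (1-e^{-u} - u e^{-u})\,u^{-1-\alpha}$, and $g$ is integrable on $(0,\infty)$: near $0$ it behaves like $u^{1-\alpha}$ (integrable since $\alpha<2$), and near $\infty$ like $u^{-1-\alpha}$ (integrable since $\alpha>0$). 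To pass to the limit I would use dominated convergence after extracting a uniform integrable bound (for instance, use $(1-(1-u/i)^i) \leq \min(1, u)$ and $u(1-u/i)^{i-1} \leq u$ for the large-$u$ tail, and the elementary inequality $1-(1-x)^i - ix(1-x)^{i-1} \leq \binom{i}{2} x^2$ for the small-$u$ part, which after substitution yields $\leq u^{2}/2 \cdot u^{-1-\alpha}$). This gives $r_i \sim C_\alpha\, i^{\alpha}$ for a positive constant $C_\alpha$, so $\sum_{j \geq K} 1/r_j < \infty$.

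The main obstacle is the asymptotic estimate in the Beta case: one must justify the limit of the rescaled integral uniformly and identify the dominant $i^\alpha$ scaling. Once that is in place, everything else is routine and both bullet points of the corollary follow from Lemma \ref{c}.
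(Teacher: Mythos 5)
Your proposal is correct, and the overall architecture (reduce to Lemma \ref{c}, then show $\sum_{j\geq K} 1/r_j<\infty$ in each case, with the $c\neq 0$ case being immediate from $r_j \geq c\,j(j-1)/2$) coincides with the paper's. Where you genuinely diverge is in the Beta case: the paper does not attack the integral defining $r_i$ directly, but instead invokes the identity $r_{j+1}-r_j=\int_{(0,1]} j(1-x)^{j-1}x^2\,\nu(dx)$ (quoted from Lemma 2 of Limic and Sturm), recognizes the right-hand side as $j\,\mathrm{Beta}(2-\alpha,j+\alpha-1)\sim\Gamma(2-\alpha)\,j^{\alpha-1}$, and sums to get $r_j\sim\Gamma(2-\alpha)j^{\alpha}/\alpha$. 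Your route — rescale $x=u/i$ and apply dominated convergence to get $r_i\sim C_\alpha i^{\alpha}$ with $C_\alpha=\int_0^\infty(1-e^{-u}-ue^{-u})u^{-1-\alpha}du$ — is self-contained (no external identity needed) and the domination you sketch is sound: the bound $1-(1-x)^i-ix(1-x)^{i-1}\leq \binom{i}{2}x^2$ follows from $\frac{d}{dx}\big(1-(1-x)^i-ix(1-x)^{i-1}\big)=i(i-1)x(1-x)^{i-2}\leq i(i-1)x$, giving after substitution the integrable majorant $\min(u^2/2,1)\,u^{-1-\alpha}$ (using $(1-u/i)^{\alpha-1}\leq 1$ since $\alpha>1$); for the tail the crude bound by $1$ suffices, so your auxiliary inequalities there are more than needed. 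An integration by parts even shows your $C_\alpha$ equals the paper's $\Gamma(2-\alpha)/\alpha$. The trade-off: the paper's argument is a two-line computation once the increment identity is granted, while yours costs a dominated-convergence verification but avoids citing it.
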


\begin{proof}
If $(c \neq 0),$ $r_j \geq c  j(j-1)/2$, and thus $\sum_{j \geq K} 1/r_j < \infty$.
If $\big(c=0$ and \linebreak $\nu(dx) = x^{-1-\alpha} (1-x)^{\alpha -1} \ind_{(0,1)}(x) dx$ for some $1 <\alpha < 2\big)$,
using the equality:
$$r_{j+1}-r_{j}= \int_{(0,1]} j (1-x)^{j-1} x^{2} \nu(dx);$$
one may find at Lemma 2 of Limic and Sturm \cite{Limic},
we get $$r_{j+1}-r_{j}=j \mbox{ Beta}( 2-\alpha,j+\alpha-1) \underset{j \to \infty}{\sim}  \Gamma(2-\alpha) j^{\alpha-1},$$
where, as usual, $u_j\underset{j \to \infty}{\sim} v_j$ means that $v_j \neq 0$ for $j$ large enough and $\lim_{j \to \infty} u_j/v_j=1$. We conclude that $r_j \underset{j \to \infty}{\sim} \Gamma(2-\alpha)j^{\alpha} /\alpha$, and then:
$\sum_{j \geq K} 1/r_j < \infty$, since $1< \alpha <2$.

Lemma \ref{c} allows to conclude that \reff{condition} holds in both cases.

\end{proof}

\subsection{The immigration interpretation}
\label{immigration}

We develop further the two following examples: 
\begin{enumerate}
\item[(i)] $K=K'=2$: this amounts (provided condition \reff{condition} is satisfied) on conditioning a two-type GFV process on coexistence of each type.
\item[(ii)] $1=K < K'=2$: this amounts (provided \reff{condition} is satisfied) on conditioning a two-type GFV process on absorbtion by the first type.  
\end{enumerate}
We regard the $K (=1 \mbox{ or } 2)$ first level particles in $X^h$ as $K$ external sources of immigration in the population now assimilated to the particle system $(X^h(n), n \geq K+1)$ and
decompose the generator of the process $R^h$ accordingly.
We refer to Foucart \cite{F10} for a study of GFV processes with one source of immigration ($K=1$ here).

Since $K'=2$, the resulting probability measure-valued process $R=(R_t, t\geq 0)$ and $R^h= (R^h_t, t \geq 0)$ on $\{1,2\}$ may be simply described by the $[0,1]$-valued processes $R\{1\}=(R_t\{1\}, t \geq 0)$ and $R^{h}\{1\}=(R^h_t\{1\}, t \geq 0)$ respectively. 
For the sake of simplicity, we will just write $R$ for $R\{1\}$ and $R^h$ for $R^h\{1\}$ respectively.

We recall that the infinitesimal generator of $R$ is given by:
\begin{equation*}
Gf(x)=\frac{1}{2} c x(1-x) f''(x)  + x \int_{(0,1]}  \nu(dy) [ f(x(1-y)+y)-f(x) ] + (1-x) \int_{(0,1]}  \nu(dy) [ f(x(1-y))-f(x) ]
\end{equation*}
for all $f \in \C^{2}([0,1])$, the space of twice differentiable functions with continuous derivatives, and $x \in [0,1]$, see Bertoin and Le Gall \cite{BLG}. 
% Let us assume that $K=K'$, partition the genetic type space $\{1, \ldots, K\}$ in two groups, and identify the genetic types which lie in a same group.
% Equivalently, we can and will 
% : 
% \begin{equation}
% R_t=R_t\{1\} \; \delta_{1} + (1-R_t\{1\}) \; \delta_{2}. 
% \end{equation}
%Olivier:ref!
\subsubsection{ We assume $K=K'=2$} 
\label{K=K'}
% Results of Section 2 allow us to compute the generator of the conditioned process in two different ways.
We define, for $f \in \C^{2}([0,1])$, and $x \in [0,1]$:
\begin{align*}
G^{0}f(x) = c (1-2x)f'(x) + \int_{(0,1]} y(1-y) \nu(dy) [f(x(1-y)+y)-f(x)] &\\
   +  \int_{(0,1]} y(1-y) \nu(dy) [f(x(1-y))-f(x)]& , 
\end{align*}
and
\begin{align*}
G^{1}f(x)  = \frac{1}{2} c x(1-x)f''(x)+ x \int_{(0,1]}  (1-y)^2 \nu(dy) [f(x(1-y)+y))-f(x)] & \\
 + (1-x) \int_{(0,1]} (1-y)^2 \nu(dy) [f(x(1-y))-f(x)]&.
\end{align*}
\begin{proposition}
\label{diffusion}
Assume $K=K'=2$.
The operator $G^{0}+G^{1}$ is a generator for $R^h$. 
\end{proposition}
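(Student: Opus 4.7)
The plan is to apply the Doob $h$-transform at the level of generators. By Lemma \ref{martingale_mult} with $K=K'=2$, setting $h(x) \equal x(1-x)$, the process $M_t = h(R_t)\expp{r_2 t}/\E[h(R_0)]$ is a $\G$-martingale. Since $h \in \C^{2}([0,1])$, an easy direct computation (the Kingman term contributes $-ch(x)$ and the jump term contributes $-h(x)\int_{(0,1]} y^2\nu(dy)$) yields the eigenfunction relation $Gh = -r_2 h$, with $r_2 = c + \int_{(0,1]} y^2\nu(dy)$. Consequently, the generator of the $h$-transformed process $R^h$, acting on $f \in \C^{2}([0,1])$, is
\begin{equation*}
G^h f(x) = \frac{G(hf)(x) - f(x)\,Gh(x)}{h(x)},
\end{equation*}
and the whole proof reduces to showing that the right-hand side coincides with $G^0 f(x) + G^1 f(x)$.

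For the diffusion contribution I would use the Leibniz identity $(hf)'' - h''f = 2h'f' + hf''$: since $h'(x) = 1-2x$, the Kingman part of $G(hf) - fGh$ equals $\tfrac{c}{2}x(1-x)\bigl(2(1-2x)f'(x) + h(x) f''(x)\bigr)$, which after division by $h(x)$ collapses to $c(1-2x)f'(x) + \tfrac{c}{2}x(1-x)f''(x)$. This accounts at once for the drift of $G^0$ and the diffusion of $G^1$. For the jump contribution, the task is to evaluate the two weights $x\,h(x(1-y)+y)/h(x)$ and $(1-x)\,h(x(1-y))/h(x)$. The key algebraic identities, which follow immediately from $1-(x(1-y)+y) = (1-x)(1-y)$ and $1-x(1-y) = 1-x+xy$, are
\begin{equation*}
\frac{x\,h(x(1-y)+y)}{h(x)} = x(1-y)^2 + y(1-y), \qquad \frac{(1-x)\,h(x(1-y))}{h(x)} = (1-x)(1-y)^2 + y(1-y).
\end{equation*}
Each weight splits cleanly into a $(1-y)^2$-piece multiplied by $x$ or by $1-x$, which together reassemble the jump part of $G^1$ (with effective measure $(1-y)^2\nu(dy)$), and a symmetric $y(1-y)$-piece independent of $x$, which together reassemble the jump part of $G^0$ (with effective measure $y(1-y)\nu(dy)$).

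Collecting the diffusion and jump contributions then gives $G^h f = G^0 f + G^1 f$ on $\C^{2}([0,1])$. The only non-routine step is the algebraic decomposition displayed above; the rest is bookkeeping. The splitting is nonetheless transparent from the immigration interpretation: the $y(1-y)$ pieces describe reproduction events in which exactly one of the two privileged levels $\{1,2\}$ is designated as the parent, producing the symmetric, $x$-independent drift and jumps of $G^0$, whereas the $(1-y)^2$ pieces describe reproduction events that miss both privileged levels, so that the bulk population evolves as a GFV process with tilted measure, which is precisely $G^1$.
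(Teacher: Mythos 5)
Your proof is correct and follows essentially the same route as the paper: both identify $R^h$ as the Doob $h$-transform associated with the space-time harmonic function $H(t,x)=x(1-x)\expp{r_2 t}$ and reduce the claim to the formula $G^h f=\bigl(G(hf)-f\,Gh\bigr)/h$, which is exactly the paper's identity \reff{hgenerator} once one uses $Gh=-r_2h$. The only difference is that you carry out explicitly the eigenfunction relation and the two jump-weight identities that the paper dismisses as ``a simple computation,'' and your algebra checks out.
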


% \begin{rem}
%  We stress that this Proposition is meaningful and also holds if $(CDI)$ does not hold. 
% \end{rem}

\begin{rem}
When the measure $\nu$ is null, the process $R$ is called a Wright Fisher (WF in the following) diffusion. In that case, the process $R^h$ may be seen as a WF diffusion with immigration, where the two first level particles induce continuous immigration (according to $G^0$) of both types 1 and 2 in the original population (which evolves according to $G^1=G$ in that case).

When the measure $\nu$ is not null, the process $R^h$ may still be recognized as a GFV process with immigration, but the generator $G^1$ is no more that of the initial GFV process $G$: the two first level particles induce  both continuous and discontinuous immigration   (according to $G^0$) of types 1 and 2 in a population with a reduced reproduction (the measure $\nu(dy)$ is ponderated by a factor $(1-y)^2 \leq 1$ in $G^1$).
\end{rem}

\begin{proof}
Let us denote by $G^h$ the generator of $R^h$. The process $R^{h}$ is the Doob $h$-transform of $R$ for the following function $H$, which is space time harmonic according to Lemma \ref{martingale_mult}:
\begin{equation*}
H(t,x)= x(1-x)\expp{r_2 t}. 
\end{equation*}
From the definition of the generator, for $f \in \C^{2}([0,1])$, and $x \in [0,1]$:
\begin{equation*}
f(R_t) H(t,R_t)- f(R_0) H(0,R_0) - \int_{0}^{t} ds \; G(H(s,.) f)(R_s) - \int_{0}^{t} ds \; \partial_{t} H(.,R_s)(s) f(R_s)
\end{equation*}
is $\G$ martingale, where in the first integrand $G$ acts on $x \to f(x) H(s,x)$. Therefore, on $\{ H(0,R_0)\neq0\}$, the process
\begin{equation*}
\frac{H(t,R_t)}{H(0,R_0)} f(R_t) - f(R_0) - \int_{0}^{t} ds \; \frac{H(s,R_s)}{H(0,R_0)} \frac{G( H(s,.) f)(R_s)}{H(s,R_s)} - \int_{0}^{t} ds \; \frac{H(s,R_s)}{H(0,R_0)}  \frac{\partial_{t} H(s,R_s)}{H(s,R_s)} f(R_s)
\end{equation*}
is again $\G$ martingale under $\P$. This implies that:
\begin{equation*}
f(R^h_t) - f(R^h_0) - \int_{0}^{t} ds \;  \frac{G(H(s,.) f )(R^h_s)}{H(s,R^h_s)} - \int_{0}^{t} ds \;  \frac{\partial_{t} H(s,R^h_s)}{H(s,R^h_s)} f(R^h_s)
\end{equation*}
is a $\G$ martingale under $\P$. We thus have:
\begin{align}
\label{hgenerator}  
G^{h}f(x)&=\bigg(\frac{G(H(t,.) f)+ \partial_t H(t,.) f}{H(t,.)}\bigg)(x) \\ 
&= \frac{G(H(t,.) f)}{H(t,.)}(x) + r_{2} f(x), \nonumber
\end{align}
a formula which holds for any $t \geq 0$. A simple computation completes the proof of the proposition. 
\end{proof}

% \begin{rem}
% \label{remL}
% We choose to write the Proposition for the generator $L^h$ of the $h$-transform since the process $R^h$ is defined in more generality than the conditioned process $R^{h}$: the formula still holds whenever the CDI property is not satisfied. 
% \end{rem}

Using the particle system $X^h$, we also have the following intuitive interpretation of the generator $G^h$ in the case of a pure jump GFV process $(c=0)$. 
Let us decompose the intensity measure $\nu$ as follows:
$$\nu(dy)= 2 y(1-y)\nu(dy)+ (1-y)^2\nu(dy) + y^2 \nu(dy).$$

\begin{enumerate}
\item The first term is the sum of the two measures $y(1-y)\nu(dy)$ appearing in each integrand of the generator $G^{0}$ and each of these measures corresponds to the intensity of the reproduction events involving level $1$ and not level $2$, or level $2$ and not level $1$ (these events have probability $y(1-y)$ when the reproduction involves a fraction $y$ of the population).
\item  The second term is the measure $(1-y)^2\nu(dy)$  appearing in the generator $G^{1}$ and corresponds to the intensity of the reproduction events involving neither level $1$ nor level $2$ (this event has probability $(1-y)^2$ when the reproduction involves a fraction $y$ of the population).
\item The third term does not appear in the generators $G^0$ and $G^1$: it corresponds to the intensity of the reproduction events involving both level $1$ and $2$, and these events have been discarded in the construction of $X^h$.
\end{enumerate}

\subsubsection{We assume $K=1,K'=2$} 
\label{K'=1}
Note that the case $K=1$ differs from the case $K=2$, since the event $\{R_{t} \neq 0 \mbox{ for every } t\}$ has positive probability under \reff{notnull}. Let us  define, for $f \in \C^{2}([0,1])$, and $x \in [0,1]$:
\begin{align*}
I^{0}f(x)& = c (1-x)f'(x)+ \int_{(0,1]} y \nu(dy) [f(x(1-y)+y)-f(x)] 
\end{align*}
and
\begin{align*}
I^{1}f(x) = \frac{1}{2} c x(1-x)f''(x)+  x  \int_{(0,1]} (1-y) \nu(dy) [f(x(1-y)+y))-f(x)] & \\
 + (1-x) \int_{(0,1]} (1-y) \nu(dy) [f(x(1-y))-f(x)]&.
\end{align*}
% see proposition 5 of the preprint \cite{FH11}.
We can then prove the analog of Proposition \ref{diffusion} in that setting.
\begin{proposition}
\label{diffusion2}
Assume $K=1, K'=2$.
The operator $I^{0}+I^{1}$ is a generator for the Markov process $R^h$.
\end{proposition}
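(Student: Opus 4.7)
My plan is to mirror verbatim the proof of Proposition \ref{diffusion}. The starting observation is that Lemma \ref{martingale_mult} specialized to $K=1$, combined with the fact that $r_1 = 0$, identifies $R^h$ as the Doob $h$-transform of $R$ for the (now time-independent) space-time harmonic function $H(x) = x$. The general formula \reff{hgenerator} derived in the proof of Proposition \ref{diffusion} then yields, for $f \in \C^{2}([0,1])$ and $x \in (0,1]$,
$$G^h f(x) = \frac{G(\tilde f)(x)}{x}, \qquad \tilde f(u) \equal u f(u),$$
where $G$ is the generator of $R$ recalled at the beginning of Subsection \ref{immigration}.

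What remains is a direct algebraic expansion of $G(\tilde f)$ and a comparison with $I^{0} + I^{1}$. Using $\tilde f''(x) = 2 f'(x) + x f''(x)$, the diffusive part of $G(\tilde f)(x)/x$ immediately splits into the drift $c(1-x) f'(x)$ of $I^{0}$ and the diffusion $\tfrac{1}{2} c x(1-x) f''(x)$ of $I^{1}$. For the jump part, writing $z = x + y(1-x) = y + x(1-y)$, the key algebraic identity
$$\int_{(0,1]} \nu(dy)\, z\, [f(z) - f(x)] = \int_{(0,1]} \nu(dy)\, y\, [f(z) - f(x)] + x \int_{(0,1]} \nu(dy)\, (1-y)\, [f(z) - f(x)]$$
recovers the integral term of $I^{0}$ together with the first integral term of $I^{1}$. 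The second integral term of $I^{1}$ then falls out of the corresponding ``push down'' contribution $(1-x)\int \nu(dy)[(1-y)f(x(1-y)) - f(x)]$ in $G(\tilde f)(x)/x$ after a parallel rearrangement.

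The main point to watch is the integrability subtlety already present in the definition of $G$: the hypothesis $\int y^2 \nu(dy) < \infty$ is weaker than the integrability of the individual pieces of the decomposition, and the rearrangements formally invoke the (possibly divergent) quantity $\int \nu(dy)\, y$. I would therefore keep the ``push up'' and ``push down'' integrals paired throughout, relying on the first-order cancellation $x \cdot y(1-x) f'(x) + (1-x)\cdot(-xy) f'(x) = 0$, which ensures that each paired integrand is $O(y^2)$ and thus $\nu$-integrable. Once this care is taken, the identity $G^h = I^{0} + I^{1}$ on $\C^{2}([0,1])$ follows from the algebraic computation outlined above.
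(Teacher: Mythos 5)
Your proposal is correct and follows essentially the same route as the paper: the paper's proof simply says the argument of Proposition \ref{diffusion} carries over with the space-time harmonic function $H(t,x)=x$ (harmonic by Lemma \ref{martingale_mult} since $r_1=0$), which is precisely your use of formula \reff{hgenerator} with $\tilde f(u)=uf(u)$. Your additional attention to the $O(y^2)$ pairing needed for $\nu$-integrability is a sound refinement of the computation the paper leaves implicit.
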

In particular, we recover the well known fact that a WF diffusion conditioned on fixation at 1 (that is, $R_t= 1$ for $t$ large enough) may be viewed as a WF process with immigration, see \cite{D08} for instance.

\begin{proof}
The proof is similar to that of Proposition \ref{diffusion}. Here we use an $h$-transform with the function
\begin{equation*}
H(t,x)= x. 
\end{equation*}
This function is space time harmonic according to Lemma \ref{martingale_mult} (recall $r_1=0$).
% , except that the function $H$ used to perform the $h$-transform is now the space-time harmonic function:
\end{proof}
Here again, we may have obtained the generator of $R^h$ without computation. We explain the procedure in the case of a pure jump GFV process $(c=0)$. 
We decompose $\nu$ as follows:
$$\nu(dy)= y\nu(dy)+ (1-y)\nu(dy).$$
\begin{enumerate}
 \item The first term is the measure $y\nu(dy)$ appearing in the generator $I^{0}$. This is the intensity of the reproduction events involving level $1$ particle. We interpret them as immigration events.
\item  The second term is the measure $(1-y)\nu(dy)$  appearing in the generator $I^{1}$. This is the intensity of the reproduction events not involving level $1$ particle. We interpret them as reproduction events.
\item Summing the two measures $y\nu(dy)$ and $(1-y)\nu(dy)$, we recover this time the full measure $\nu(dy)$ since no reproduction events are discarded in the case $K=1$.
\end{enumerate}

\subsubsection{Intertwining} 
We assume $K'=2$ and $\nu=0$ (for the sake of simplicity). In this Subsection, we observe that the processes $L$ and $R$ are intertwined in the sense of Rogers and Pitman \cite{PR81}.  This adds another decomposition to the striking one of Swart, see \cite{SW11}, which does not admit a clear interpretation from the lookdown particle system.

\noindent
In fact, we find it more convenient to prove rather that $L^1$ and $R$ are intertwined, where:
$$L^{1}(t)=\inf{ \{ i \geq 1, 1 \in \{X_{t}(1), \ldots, X_{t}(i) \} \}}$$
is the first level occupied by a type 1 particle. The process $L^1$ is valued in $\N \cup \{\infty\}$, and jumps by $1$ at rate $c \frac{\ell(\ell-1)}{2}$ when at $\ell$. In particular, $1$ is an absorbing point for $L^1$. Notice also that the process $R^h$ studied in Subsection \ref{K'=1} is the process $R$ conditioned on $\{L^1=1\}$.

Let us define the following kernel:
$$\hat{K}(x,\ell)= (1-x)^{\ell-1} x \ind_{(0,1]}(x) + \ind_{\{\infty\}}(\ell) \ind_{\{0 \}}(x), \quad x \in [0,1], \ell \in \N \cup \{\infty\}.$$
acting on function $f(x,\ell)$ as follows:
$$\hat{K}f(x)=\sum_{1 \leq \ell <\infty} K(x,\ell) f(x,\ell).$$
We slightly abuse of notation by still denoting by $G$ the generator of the Wright Fisher diffusion:
$$ Gf(x)= \frac{1}{2} c x (1-x) f''(x)$$
acting on $f \in \C^{2}([0,1])$. We denote $\hat{G}$ the generator defined for $\ell<\infty$ by:
$$ \hat{G}f(x,\ell)= \frac{1}{2} c x (1-x) \partial_{xx} f(x,\ell) + c \left[ (1-x)-(\ell-1)x \right] \partial_{x} f(x,\ell) +   c \frac{\ell(\ell-1)}{2}    \left[ f(x,\ell+1) -  f(x,\ell) \right] 
$$
and for $\ell=\infty$ by:
$ \hat{G}f(x,\infty)=0$. This generator acts on functions $f$ such that $f$, as a function of $x$, belongs to $\C^{2}([0,1])$. 
The intertwining relationship reads as follows.
\begin{proposition}
Let $f$ be in the domain of  $\hat{G}$ and $x \in [0,1].$ The kernel $\hat{K}$ intertwins the generators $G$ and $\hat{G}$ in the sense that: $$\hat{K} \hat{G}(f)(x)= G \hat{K}(f)(x).$$
% meaning that $\hat{K} \hat{G}f(x,\ell)= G \hat{K}f(x,\ell)$ for each function $f(x,\ell)$ 
\end{proposition}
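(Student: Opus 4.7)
The plan is a direct coefficient-matching computation. Set $a_\ell(x):=x(1-x)^{\ell-1}$, so that $\hat{K}f(x)=\sum_{\ell\geq 1}a_\ell(x)f(x,\ell)$ for $x\in(0,1]$; the endpoint $x=0$ is handled trivially, since $G\hat{K}f(0)=0$ thanks to the prefactor $x(1-x)$, and $\hat{K}\hat{G}f(0)=\hat{G}f(0,\infty)=0$ by the convention $\hat{G}f(x,\infty)=0$. For $x\in(0,1]$ both sides of the identity are manifest linear combinations of $f(x,\ell)$, $\partial_x f(x,\ell)$ and $\partial_{xx}f(x,\ell)$ for $\ell\geq 1$, and it suffices to show that the three families of coefficients agree.

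I would first expand $G\hat{K}f=\tfrac12 cx(1-x)(\hat{K}f)''$ via the product rule, obtaining the three contributions $\tfrac12 cx(1-x)\sum_\ell a_\ell''f_\ell$, $cx(1-x)\sum_\ell a_\ell' f_\ell'$, $\tfrac12 cx(1-x)\sum_\ell a_\ell f_\ell''$, where $f_\ell(x):=f(x,\ell)$. Then I would substitute the definition of $\hat{G}$ into $\hat{K}\hat{G}f$ and reindex the jump sum $\sum_\ell a_\ell\tfrac{\ell(\ell-1)}{2}(f_{\ell+1}-f_\ell)$, collecting the coefficient of each $f_m$ as $-\tfrac{m(m-1)}{2}a_m+\tfrac{(m-1)(m-2)}{2}a_{m-1}$. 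Matching the three families of coefficients then reduces to three pointwise identities in $x$: (i) the $f_\ell''$ coefficients are trivially both $\tfrac12 cx(1-x)a_\ell(x)$; (ii) the $f_\ell'$ identity $x(1-x)a_\ell'(x)=a_\ell(x)[(1-x)-(\ell-1)x]$, which is immediate from $a_\ell(x)=x(1-x)^{\ell-1}$; (iii) the $f_\ell$ identity $\tfrac12 x(1-x)a_\ell''(x)=-\tfrac{\ell(\ell-1)}{2}a_\ell(x)+\tfrac{(\ell-1)(\ell-2)}{2}a_{\ell-1}(x)$, which, after computing $a_\ell''(x)=-2(\ell-1)(1-x)^{\ell-2}+(\ell-1)(\ell-2)x(1-x)^{\ell-3}$ and using $x^2-x=-x(1-x)$, collapses to a routine cancellation.

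The main obstacle is (iii): this is the nontrivial algebraic incarnation of the probabilistic fact that the second-order diffusion operator $G$ arises as the $\hat{K}$-image of a pure-jump operator on $\ell$. The identity is natural from the lookdown picture, since conditional on $R(t)=x$ the variable $L^1(t)$ is geometric with parameter $x$ and is driven by the same Kingman reproduction events as $R$; the rate $c\ell(\ell-1)/2$ for the jump $\ell\to\ell+1$ is precisely what is needed so that $\hat{K}$ intertwines the jump dynamics on $\ell$ with the Wright--Fisher diffusion. One could alternatively derive the intertwining from this pathwise picture a posteriori via the Rogers--Pitman criterion, but the coefficient-matching above gives the cleanest direct analytic proof.
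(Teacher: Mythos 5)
Your proposal is correct: the paper omits this proof entirely, saying only that it ``consists in a long but simple calculation'', and your coefficient-matching argument is precisely that calculation --- I checked the three identities and they hold (for (iii), after writing $\tfrac12 x(1-x)a_\ell''=-(\ell-1)a_\ell+\tfrac{(\ell-1)(\ell-2)}{2}xa_{\ell-1}$, the key cancellation is $(1-x)a_{\ell-1}=a_\ell$). The only point worth a passing remark is the term-by-term double differentiation of the series $\sum_{\ell}a_\ell f_\ell$, which is justified by the geometric decay of $a_\ell$ and its derivatives on compact subsets of $(0,1)$.
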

The proof consists in a long but simple calculation and is eluded. A similar intertwining relation also holds for $\nu \neq 0$, but the generator $\hat{G}$ is then more complicated (because $L^1$ and $R$ may jump together in that case).
The intertwining relation implies that the first coordinate of the process with generator $\hat{G}$ is an autonomous Markov process with generator $G$. 

Now, it may be justified that the generator $\hat{G}$ is the generator of $(R,L^1)$.
In fact, the process $L^1$ is plainly Markov in its own filtration and jumps from $\ell$ to $\ell +1$ at rate $c \ell (\ell-1)/2$. Then conditionally on the value of $L=\ell$, we view the $\ell$ first particles as $\ell$ immigrants,  and the process $R$ as a Wright Fisher diffusion with $\ell$ sources of immigration, $\ell-1$ sources of  type $2$ and one source of type $1$, whence the drift term $c \left[ (1-x)-(\ell-1)x \right]$ thanks to similar calculations as in \ref{K=K'}. 
We thus obtain the following pathwise decomposition of a Wright Fisher diffusion, which is another way to express the intertwining relation:
% The intertwining relationship then reads as a pathwise decomposition of the Wright Fisher process $R$:
\begin{itemize}
\item Conditionally on $\{R_0=x\}$, the initial value $L^1(0)$ has law: $$\P(L^1(0)=\ell)=(1-x)^{\ell-1}x + \ind_{\{\infty\}}(\ell) \ind_{\{0 \}}(x), \ \ell \geq 1.$$ 
\item Conditionally on $(R_0, L^1(0))$, the process $L^1$ is a pure jump Markov process, which jumps from $\ell$ to $\ell+1$ at rate $c \ell(\ell-1)/2$ if $\ell <\infty$.
\item Conditionally on $(R_0, L^1)$, the process $R$ is a Wright Fisher diffusion with immmigration, with generator given by:
 $$\frac{1}{2} c x(1-x) f''(x) + \ind_{\{L^1<\infty\}} \ c  \left[  (1-x)-(L^1-1)x\right] f'(x).$$
\end{itemize}

\section{The additive $h$-transform}

In this Section, we derive another example of an $h$-transform (of measure valued processes) admitting a simple construction from the lookdown particle system. 

\subsection{The general construction of the lookdown particle system}
\label{spatial2}
% 
%  (which may be due to a jump of $Y$, but not necessarily)

We first present a more general construction of an exchangeable particle system, which allows to deal with type mutation and non constant population size. We recall this model was defined (in even greater generality) in \cite{DO99}.

Let $E$ be a Polish space. We consider a triple $(R_0,Y,U)$ constructed as follows. $R_0$ stands for a probability measure on $E$, and $Y=(Y_t, t \geq0)$ and $U=(U_t, t \geq 0)$ for two non negative real valued processes. We assume that $U_0=0$ and $U$ is non decreasing, so that $U$ admits a unique decomposition $U_t=U^k_t+ \sum_{s \leq t} \Delta U_s$ where $U^k$ is continuous (with Stieltjes measure denoted by $dU^k$) and $\Delta U_s=U_s-U_{s-}$.  We assume that $0$ is an absorbing point for $Y$, and set $\tau(Y)=\inf{\{t>0, Y_t=0}\}$ the extinction time of $Y$. We also assume that for each $s \geq 0$, $\Delta U_s \leq Y_s^2$.
Conditionally on $U$ and $Y$, we define two point measures $N^{\rho}$ and $N^k$ on $\R_{+}\times \p_{\infty}$, where $\p_{\infty}$ denotes the set of partition of $\N$: 
\begin{itemize}
 \item $N^{\rho}=\underset{0 \leq t < \tau(Y), \Delta U_t \neq 0} {\sum} \delta_{(t,\pi)}(dt,d\pi)$ where the exchangeable partitions $\pi$ of $\N$ are independent and have a unique non trivial block with asymptotic frequency $\sqrt{\Delta U_t}/Y_t$.
\item $N^{k}=\underset{0 \leq t < \tau(Y)} {\sum} \delta_{(t,\pi)}(dt,d\pi)$ is an independent Poisson point measure with intensity $(dU^k_t/(Y_t)^2)\times  \mu^{k}$, and the Kingman measure $\mu^k$ assigns mass one to partitions with a unique non trivial block consisting of two different integers, and mass $0$ to the others.
\end{itemize}
Conditionally on $(R_{0},Y,U)$, we then define a particle system $X=(X_t(n), 0 \leq t < \tau(Y), n \in \N)$ as follows:
\begin{itemize}
\item The initial state $(X_0(n), n \in \N)$ is an exchangeable sequence valued in $E$ with de Finetti's measure $R_{0}$.
\item At each atom $(t,\pi)$ of $N:=N^{k}+N^{\rho}$, we associate a \textit{reproduction event} as follows: let $j_1 < j_2< \ldots$ be the elements of the unique block of the partition $\pi$ which is not a singleton (either it is a doubleton if $(t,\pi)$ is an atom of $N^k$ or an infinite set if $(t,\pi)$ is an atom of $N^\rho$). The individuals $j_1<j_2 < \ldots$ at time $t$ are declared to be the children of the individual $j_1$ at time $t-$, and receive the type of the parent $j_1$, whereas the types of all the other individuals are shifted upwards accordingly, keeping the order they had before the birth event: for each integer $\ell$, $X_t(j_\ell)=X_{t-}(j_1)$ and for each $k \notin \{j_{\ell},\ell \in \N\}$, $X_t(k)=X_{t-}(k-\# J_k )$ with $J_k \equal \{\ell>1, j_{\ell} \leq k\}$ and $\# J_k$ the cardinal of the set $J_k$.
\item Between the reproduction events, the type $X_t(n)$ of the particle at level $n$ mutates according to a Markov process with c\`adl\`ag paths in $E$, with law $(\rP_{x}, x \in E)$ when started at $x \in E$, independently for each $n$.
\end{itemize}
This defines the particle system $X$ on $[0,\tau(Y))$. Since the process $X_s(j)$ admits a limit as $s$ goes to $\tau(Y)$ for each $j$, we may set $X_t(j)= \lim_{s \to \tau(Y)} X_s(j)$ for each $t>\tau(Y)$ and this sequence is still exchangeable according to Proposition 3.1 of \cite{DO99}. Conditionally on $(R_{0},Y,U)$, the sequence $(X_t(n), n \in \N))$ is well defined for each $t \in \R^+$ and exchangeable according to the same Proposition 3.1 of \cite{DO99}. We denote by $R_t$ its de Finetti measure:
\begin{equation*}
 R_t(dx)= \lim_{N \to \infty}   \frac{1}{N} \ \sum_{n=1}^{N} \delta_{X_t(n)}(dx),
\end{equation*}
and this defines a random process indexed by the set of non negative real numbers $\R^+$ since a c\`adl\`ag version of the process $R$ is shown to exist in \cite{DO99}. We finally define the measure valued process of interest $Z$ by:
\begin{equation}
\label{defZ}
(Z_t, t \geq 0)=(Y_t \; R_t,  t \geq 0). 
\end{equation}
The finite measure $Z$ represents the distribution of a population distributed in a space $E$, the process $Y$ corresponds to the total population size, and $U$ tracks the resampling inside the population.
We stress that, conditionally given $R_t$, the random variables $(X_t(n), n \in \N)$ on $E$ are independent and identically distributed according to the probability measure $R_t$ thanks to the de Finetti Theorem. 

We will denote by $\P$ the law of the triple $(Y,U, X)$. We introduce the relevant filtrations:
\begin{itemize}
\item $\left(\F_t=\sigma((Y_s, s\leq t),(X_s, s\leq t))\right)$ corresponds to the filtration of the particle system and the total population size.
\item $\left(\G_t=\sigma(Z_s, s\leq t)\right)$ corresponds to the filtration of the resulting measure valued process.
\item $\D_t$ is the filtration induced by the canonical process under $\rP$.
\end{itemize}
We shall use the classical notation $\mu(f) = \int \mu(dx) f(x)$ for a non negative map $f:E \to \R$ and $\mu \in \M_{f}$.
Note that $Y_t=Z_t(\ind)$, and thus $Y$ is $\G$-measurable.

\subsection{The additive $h$-transform}

We denote by $\M_f$ the space of finite measures on $E$. We call a non-negative function $H$ on $[0,\infty) \times \M_f$ a space-time harmonic function for $\P$ when the process $(H(t,Z_t), t \geq 0)$ is a martingale under $\P$. The $h$-transform $Z^H$ of $Z$ associated with $H$ is then defined by:
\begin{equation}
\label{htransform}
\forall A \in \G_t, \ 
\P(Z^H \in A) =\frac{H(t,Z_t)}{\E(H(0,Z_0))} \P \left(Z \in A\right).
\end{equation}
for every $t \geq 0$. Furthermore, an $h$-transform is called additive if there exists a non-negative function $(h_t(x), t\geq 0, x \in E)$ such that $H(t,Z_t)=Z_t(h_t)$.
\begin{rem}
Loosely speaking, an additive $h$-transform of the form \reff{htransform} favours the paths for which the population (represented by the measure valued process) is large where $h$ is large.
\end{rem}

\subsubsection{Statement of the results}
\label{results}

Let $\xi$ be the canonical process under $\rP_x$. We assume that $\left(Y_t/m(t), t \geq 0\right)$ and $(m(t)h_t(\xi_t), t \geq0)$ are martingales in their own filtrations for some positive deterministic function $m$.
% , and denote by $Z^H$ the additive $h$-transform  given by  \reff{htransform} with $H(t,Z_t)=Z_t(h_t)$ (this is licit according to Lemma \ref{licit}). 
We assume from now on that
\begin{equation*}
% \label{notnull2} 
\E(Y_0 R_0(h_0))>0. 
\end{equation*}
Under this assumption, we define (the law of) a new process $$(Y^h,U^h,X^h)$$ by the following requirements:
\begin{itemize}
\item[(i)] The initial condition satisfies:
\begin{equation*}
\forall A \in \G_{t}, \P((Y^h_0,R^h_0) \in A)= \E\left( \frac{Y_0 R_0(h_0)}{\E(Y_0 R_0(h_0))} \ind_{A}(Y_0,R_0)\right).
\end{equation*}
\item[(ii)] Conditionally on $(Y^h_0,R^h_0)$, and provided $R^h_0(h_0)>0$, $X_{0}^h(1)$ is distributed according to:
\begin{equation*}
\forall A \in \D_0, \P(X_{0}^{h}(1) \in A| R_0^h=\mu)= \E \left( \frac{h_{0}(X_{0}(1))}{\mu(h_0)} \ind_{A}(X_0(1)) | R_0=\mu \right) ,
\end{equation*}
and $(X_{0}^h(n), n\geq 2)$ is an exchangeable random sequence with de Finetti's measure $R_{0}^h$.
\item[(iii)] Conditionally on $(Y^h_0,R^h_0, X_{0}^{h}(1))$, the process $(Y^h,U^h)$ is distributed according to:
\begin{equation}
\label{Yh}
\forall A \in \G_{t}, \ \P((Y^{h}, U^{h}) \in A| Y^h_0=x) = \E \left(\frac{Y_t}{x} \frac{m(0)}{m(t)} \; \ind_{A}(Y,U) | Y_0=x \right). 
\end{equation}
\item[(iv)] Conditionally on $(Y^h, U^h ,R^h_0, X_{0}^{h}(1))$, $X^{h}(1)$ is distributed according to:
\begin{equation}
\label{Xh}
\forall A \in \D_t, \; \P(X^h(1) \in A | X_0^h(1)=x)= \E \left( \frac{h_t(X_t(1))}{h_{0}(x)} \frac{m(t)}{m(0)} \ind_{A}(X(1))|X_0(1)=x\right). 
\end{equation}
\item[(v)] The rest of the definition of $X^h$ is the same as the one given for $X$, namely:
\begin{itemize}
\item for $n \geq 2$, between the reproduction events, the type $X^h_t(n)$ of the particle at level $n$ mutates according to a Markov process in $E$ with law $(\rP_{x}, x \in E)$ when started at $x \in E$, independently for each $n$.
\item at each atom $(t,\pi)$ of $N=N^{k}+N^{\rho}$, \textit{with $N^k$ and $N^\rho$ derived from $U^h$ and $Y^h$,} a reproduction event is associated as previously.
\end{itemize}
\end{itemize}
Note that the law of the initial condition $Z_0^h$ specified by (i) is different from that of $Z_0$ only for random $Z_0$. Also, notice that items (iii) and (iv) are meaningful since both $(Y_t/m(t), t \geq 0)$ and $(m(t) h_t(X_{t}(1)), t \geq 0)$ are assumed to be martingales. Last, we observe from \reff{Yh} that $\P(Y_t^h=0)=0$ for each $t \geq 0$, which implies $\P(\tau(Y^h)=\infty)=1$ since $0$ is assumed to be absorbing.
We will assume that $(Y,U,X)$ and $(Y^h,U^h,X^h)$ are defined on a common probability space with probability measure $\P$, and denote the expectation by $\E$.
\\
Let us define a process $S= (S_t, t \geq 0)$ by:
$$ S_t = \frac{h_t(X_t(1)) \; Y_t }{\E(Z_0(h_{0}))}.$$
\begin{lem}
\label{licit2}
The process $(S=S_t, t \geq 0)$ is a non negative $\F$-martingale, and 
\begin{equation}
\forall A \in \F_t, \ \P(X^h \in A)= \E \left( \ind_{A}(X) \ S_t \right).
\end{equation}
\end{lem}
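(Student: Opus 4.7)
The plan is to read off the Radon--Nikodym derivative of $\P(X^h \in \cdot)$ with respect to $\P(X \in \cdot)$ on each $\F_t$ directly by chaining the successive absolute-continuity requirements (i)--(v) in the construction of $(Y^h, U^h, X^h)$; the $\F$-martingale property of $S$ will then fall out as a formal consequence of the fact that these densities are all restrictions of a single probability measure.

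Concretely, items (i) and (ii) combine into a joint density of $(Y_0^h, R_0^h, X_0^h(1))$ with respect to $(Y_0, R_0, X_0(1))$ equal to
$$\frac{y\, \mu(h_0)}{\E(Z_0(h_0))} \cdot \frac{h_0(x)}{\mu(h_0)} = \frac{y\, h_0(x)}{\E(Z_0(h_0))},$$
the factor $\mu(h_0)$ cancelling. Item (iii) biases the trajectory of $(Y, U)$ by $Y_t\, m(0)/(y\, m(t))$ and item (iv) biases the trajectory of $X(1)$ by $h_t(X_t(1))\, m(t)/(h_0(x)\, m(0))$. Multiplying these three pieces together, the factors $y$, $h_0(x)$, $m(0)$ and $m(t)$ cancel pairwise, leaving $Y_t\, h_t(X_t(1))/\E(Z_0(h_0)) = S_t$. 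Item (v) prescribes the very same dynamics for the levels $n \geq 2$ (independent mutation at each level and reproduction driven by $N^k+N^\rho$ built from $(Y^h, U^h)$) under the two laws, conditionally on the data up to (iv), so it contributes a Radon--Nikodym factor equal to $1$. Since $S_t$ is visibly $\F_t$-measurable, this yields
$$\forall A \in \F_t, \qquad \P(X^h \in A) = \E\bigl(\ind_A(X)\, S_t\bigr).$$

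The martingale property of $S$ is then a one-line consequence. Non-negativity is immediate from $Y_t \geq 0$ and $h_t \geq 0$. For $0 \leq s \leq t$ and $A \in \F_s \subset \F_t$, evaluating the displayed identity at both times gives $\E(\ind_A\, S_t) = \P(X^h \in A) = \E(\ind_A\, S_s)$, hence $\E(S_t \mid \F_s) = S_s$; taking $A = \Omega$ also yields $\E(S_t) = 1$, so $S$ is a genuine $\F$-martingale.

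The only point requiring care is to check that the biasings prescribed in (iii) and (iv) are bona fide probability kernels, i.e.\ that each prescribed density has $\P$-expectation one; but this is exactly what the two standing hypotheses ``$(Y_t/m(t))$ and $(m(t) h_t(\xi_t))$ are martingales in their own filtrations'' are there to guarantee. No other genuinely hard step is needed: the lemma is essentially bookkeeping that the four Radon--Nikodym factors telescope. This is indeed the whole reason for having defined $(Y^h, U^h, X^h)$ in four cleanly separated conditioning steps in the first place, and it avoids any stochastic-calculus computation on $\F$.
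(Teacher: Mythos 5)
Your proof is correct and follows essentially the same route as the paper's: both chain the Radon--Nikodym factors from the conditioning steps (i)--(v), observe that they telescope to $S_t = Y_t\, h_t(X_t(1))/\E(Z_0(h_0))$, and deduce the $\F$-martingale property from the consistency of the resulting family of measures on the filtration $(\F_t)$. Your added remarks on why (v) contributes a factor $1$ and why the martingale hypotheses on $Y_t/m(t)$ and $m(t)h_t(\xi_t)$ make (iii) and (iv) well-posed are just slightly more explicit versions of what the paper leaves implicit.
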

\noindent We then define the process $T$:
$$T_t= \frac{ Z_t(h_t)}{\E(Z_0(h_0))}.$$
Using Lemma \ref{licit2}, and projecting on the filtration $\G_t$, we deduce Lemma \ref{licit}.
% Let $Z$ be a finite state space GFV process without mutation.
\begin{lem}
\label{licit}
The process $T=(T_t, t\geq 0)$ is a non negative $\G$-martingale.
\end{lem}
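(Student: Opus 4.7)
The plan is to deduce Lemma \ref{licit} from Lemma \ref{licit2} by projecting the $\F$-martingale $S$ onto the smaller filtration $\G$, in complete analogy with the passage from Lemma \ref{kimura_genealogy} to Lemma \ref{martingale_mult} carried out in Section \ref{section2}.

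First I would observe that, since $S$ is a non-negative $\F$-martingale by Lemma \ref{licit2} and $\G_t \subseteq \F_t$ for every $t \geq 0$, the process $\left( \E(S_t \mid \G_t), t \geq 0 \right)$ is automatically a non-negative $\G$-martingale. It therefore suffices to identify this projection with $T_t$.

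Second, I would compute $\E(S_t \mid \G_t)$ explicitly. Both $Y_t = Z_t(\ind)$ and $R_t = Z_t/Y_t$ are $\G_t$-measurable, so the computation reduces to evaluating $\E(h_t(X_t(1)) \mid \G_t)$. Using the conditional exchangeability property built into the lookdown construction and recalled in Section \ref{spatial2} (see Proposition 3.1 of \cite{DO99}), conditionally on the trajectory of the de Finetti measure up to time $t$, and hence on $\G_t$, the particle $X_t(1)$ is distributed according to $R_t$. This yields $\E(h_t(X_t(1)) \mid \G_t) = R_t(h_t)$, and then
\begin{equation*}
\E(S_t \mid \G_t) = \frac{Y_t \, R_t(h_t)}{\E(Z_0(h_0))} = \frac{Z_t(h_t)}{\E(Z_0(h_0))} = T_t,
\end{equation*}
which combined with the first step gives the martingale property. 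Non-negativity follows from $h_t \geq 0$ and $Z_t \in \M_f$.

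The one delicate point is the justification that conditional exchangeability holds with respect to the whole path filtration $\G_t$ rather than merely with respect to $R_t$ viewed at the single time $t$. This is not a purely formal consequence of exchangeability at one fixed instant, but rather a pathwise feature of the lookdown coupling between the particles and the measure-valued process; once this is granted (as it is in \cite{DO99}), the argument collapses to the two short steps above.
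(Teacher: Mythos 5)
Your proposal is correct and follows essentially the same route as the paper: project the $\F$-martingale $S$ of Lemma \ref{licit2} onto the smaller filtration $\G$, then identify $\E(S_t\mid\G_t)$ with $T_t$ using that $X_t(1)$ has conditional law $R_t$ given $\G_t$. Your extra remark on why the conditional law statement holds given the whole path filtration $\G_t$ (and not merely given $R_t$) makes explicit a point the paper leaves implicit, but it is the same argument.
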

\noindent This fact allows to define the process $Z^H\equal (Z^H_t, t \geq 0)$ absolutely continuous with respect to $Z\equal (Z_t, t \geq 0)$ on each $\G_t$, $t \geq 0$, with Radon Nykodim derivative:
\begin{equation*}
\forall A \in \G_t, \ \P(Z^H \in A)= \E\left(\ind_{A}(Z) \ T_t \right).
\end{equation*}
We deduce from Lemma \ref{licit2} and Lemma \ref{licit} the following Theorem.
\begin{theo}
\label{additive}
We have that:
\begin{itemize}
 \item[(a)] The limit of the empirical measure:
\begin{equation*}
R^h_t(dx) \equal \lim_{N\to \infty} \frac{1}{N} \sum_{n=1}^{N} \delta_{X_{t}^h(n)}(dx)
\end{equation*}
exists a.s.
\item[(b)] The process $(Z^h_t \equal Y^h_t R^h_t, t \geq 0)$ is distributed as $(Z^H_t, t \geq 0)$.
\end{itemize}
\end{theo}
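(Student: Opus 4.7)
The plan is to imitate the proof of Theorem \ref{kimura_restriction}, substituting the pair $(S,T)$ for $(Q,M)$: the strategy is to exploit the absolute continuity on $\F_t$ provided by Lemma \ref{licit2} and then project onto the smaller filtration $\G_t$.

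For point (a), I would proceed as follows. Lemma \ref{licit2} asserts that the law of $X^h$ on $\F_t$ is absolutely continuous with respect to the law of $X$ on $\F_t$, with density $S_t$. Under $\P$, the sequence $(X_t(n), n \in \N)$ is exchangeable by construction (see Section \ref{spatial2}), so de Finetti's theorem ensures that $\frac{1}{N} \sum_{n=1}^{N} \delta_{X_t(n)}$ converges almost surely to $R_t$. The event of convergence is $\F_t$-measurable and has $\P$-probability one, hence the absolute continuity transfers this property to $X^h$, yielding the almost sure existence of $R^h_t$.

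For point (b), I would take $A \in \G_t \subset \F_t$ and apply Lemma \ref{licit2} to the event $\{Z \in A\}$, which gives
\begin{equation*}
\P(Z^h \in A) = \E\left( \ind_A(Z)\, S_t \right).
\end{equation*}
Conditioning on $\G_t$ and using the tower property, the remaining task is to identify $\E(S_t \mid \G_t)$. Since $Y_t = Z_t(\ind)$ is $\G_t$-measurable and, conditionally on $\G_t$, the variables $(X_t(n), n \in \N)$ are i.i.d.\ with common law $R_t$ by the de Finetti property emphasized just after \reff{defZ}, one gets $\E(h_t(X_t(1)) \mid \G_t) = R_t(h_t)$, hence
\begin{equation*}
\E(S_t \mid \G_t) = \frac{Y_t\, R_t(h_t)}{\E(Z_0(h_0))} = \frac{Z_t(h_t)}{\E(Z_0(h_0))} = T_t.
\end{equation*}
Substituting this back and invoking the definition of $Z^H$ through the density $T_t$ yields $\P(Z^h \in A) = \E(\ind_A(Z)\, T_t) = \P(Z^H \in A)$, which proves (b).

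I do not foresee any serious obstacle here: the architecture is structurally identical to the one used in the proof of Theorem \ref{kimura_restriction}. The only point that deserves care is the conditional de Finetti identity $\E(h_t(X_t(1)) \mid \G_t) = R_t(h_t)$ at a fixed positive time $t$, which is an intrinsic consequence of the lookdown construction recalled in Section \ref{spatial2}; otherwise the computation is a projection from $\F_t$ onto $\G_t$ exactly parallel to the proof of Lemma \ref{licit} from Lemma \ref{licit2}.
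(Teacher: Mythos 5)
Your proposal is correct and follows essentially the same route as the paper: point (a) by transferring the almost sure convergence of the empirical measure through the absolute continuity of Lemma \ref{licit2}, and point (b) by projecting the density $S_t$ onto $\G_t$ to obtain $T_t$ (which is exactly the content of Lemma \ref{licit}, which the paper invokes directly where you recompute it inline). No gaps.
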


\begin{rem}
\label{interpretation}
 We may interpret Theorem \ref{additive} as follows. The effect of the additive $h$-transform factorizes in two parts, according to the decomposition of the Radon Nikodym derivative: 
$$Z_t(h_t)=Y_t \;  R_t(h_t).$$
The first term $Y_t$ induces a size bias of the total population size $Z^h(\ind)=Y^h$, see formula \reff{Yh}, whereas the second term $R_t(h_t)$ forces the first level particle to follow an $h$-transform of $\rP$, see formula \reff{Xh}. 
\end{rem}

The sequence $(X_t^h(n), n\in \N)$ is not exchangeable in general, which contrasts with the initial sequence $(X_t(n), n\in \N)$. The following Proposition shows that, loosely speaking, the first level particle is precursory.
\begin{prop}
\label{distinguish}
Conditionally on $\{ R^h_t=\mu\}$,  $X_{1}^h(t)$ is distributed according to:
\begin{equation*}
\P(X_{t}^{h}(1) \in dx)= \frac{h_t(x)}{\mu(h_t)} \mu(dx),
\end{equation*}
and $(X_{t}^h(n))_{n \geq 2}$ is an independent exchangeable random sequence with de Finetti's measure $\mu$.
\end{prop}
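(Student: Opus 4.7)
The plan is to determine the conditional law of $(X^h_t(n))_{n \geq 1}$ given $R^h_t$ by testing against products of bounded measurable functions, translating back under $\P$ via Lemma \ref{licit2}, applying de Finetti conditionally on $\G_t$, and then recognising the resulting density through Lemma \ref{licit} together with Theorem \ref{additive}.

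Concretely, I would fix $N \geq 2$, bounded measurable maps $F_1, f_2, \ldots, f_N : E \to \R$, and a bounded measurable $g : \M_{f} \to \R$. By Lemma \ref{licit2},
\[
\E\left[F_1(X^h_t(1)) \prod_{n=2}^N f_n(X^h_t(n))\, g(R^h_t)\right]
= \E\left[F_1(X_t(1)) \prod_{n=2}^N f_n(X_t(n))\, g(R_t)\, \frac{h_t(X_t(1))\, Y_t}{\E(Z_0(h_0))}\right].
\]
The factor $g(R_t) Y_t$ is $\G_t$-measurable and, conditionally on $\G_t$, the sequence $(X_t(n))_{n \geq 1}$ is i.i.d.\ with common law $R_t$ (de Finetti's theorem, as recalled in Section \ref{spatial2}, used here conditionally on the whole $\G_t$). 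Taking a conditional expectation given $\G_t$ inside then yields
\[
\E\left[g(R_t)\, \frac{Y_t R_t(h_t)}{\E(Z_0(h_0))}\, \frac{R_t(F_1 h_t)}{R_t(h_t)} \prod_{n=2}^N R_t(f_n)\right]
= \E\left[g(R_t)\, T_t\, \Phi(R_t)\right],
\]
where $\Phi(\mu) := \mu(F_1 h_t)/\mu(h_t)\, \prod_{n=2}^N \mu(f_n)$.

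By Theorem \ref{additive}(b), $R^h$ has the law of $R^H$, so the Radon--Nikodym formula recalled right after Lemma \ref{licit} gives $\E[g(R^h_t)\, \Phi(R^h_t)] = \E[g(R_t)\, T_t\, \Phi(R_t)]$. Combining the two identities and letting $g$ range over bounded measurable functions on $\M_{f}$, a standard monotone class argument produces, conditionally on $\{R^h_t = \mu\}$,
\[
\E\left[F_1(X^h_t(1)) \prod_{n=2}^N f_n(X^h_t(n)) \,\Big|\, R^h_t = \mu\right]
= \frac{\mu(F_1 h_t)}{\mu(h_t)} \prod_{n=2}^N \mu(f_n),
\]
which is the announced factorisation: $X^h_t(1)$ is sampled from the tilted law $h_t(x)\, \mu(dx)/\mu(h_t)$, while independently $(X^h_t(n))_{n \geq 2}$ is i.i.d.\ with common law $\mu$ (hence exchangeable with de Finetti measure $\mu$).

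The only delicate point is the conditioning step: one needs de Finetti in the stronger ``conditionally on $\G_t$'' form, so that the $\G_t$-measurable factor $Y_t$ can be pulled out before integrating against $R_t^{\otimes \N}$. This strengthening is natural inside the Donnelly--Kurtz construction, where exchangeability of the particle types is preserved after further conditioning on the driving pair $(Y,U)$, but since only the weaker ``conditionally on $R_t$'' version is stated explicitly in Section \ref{spatial2} this point should be invoked with care.
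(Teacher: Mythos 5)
Your proposal is correct and follows essentially the same route as the paper's proof: transfer to $\P$ via Lemma \ref{licit2}, factorise using de Finetti conditionally on $\G_t$, and convert the $T_t$-weighted expectation back via Theorem \ref{additive}. Your inclusion of the extra test function $g(R^h_t)$ (and your caveat about needing de Finetti conditionally on all of $\G_t$ rather than just on $R_t$) only makes explicit two points that the paper's own, more condensed, computation uses implicitly.
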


\subsubsection{Proofs}

\begin{proof}[Proof of Lemma \ref{licit2}]
It is enough to observe that, by construction, the law of $(Y^h,U^h,X^h)$ is absolutely continuous with respect to the law of $(Y, U, X)$ on $\F_{t}$, with Radon Nykodim derivative given by:
\begin{align*}
\forall A \in \F_{t}, \ \P((Y^{h},U^h,  X^h) \in A)
&= \E\left( \frac{Y_0 R_0(h_0)}{\E(Y_0 R_0(h_0))} \frac{h_0(X_0^h(1))}{R_{0}(h_{0})} \frac{Y_t}{Y_0} \frac{m(0)}{m(t)} \frac{h_t(X_t(1))}{h_0(X_0^h(1))} \frac{m(t)}{m(0)} \ind_{A}(Y,U,X) \right)\\ 
&=\E \left( \frac{Y_t \;  h_t(X_t(1)) }{\E(Z_0(h_{0})) } \ind_{A}(Y,U,X) \right) 
\end{align*}
This also yields (the obvious fact) that $\left(S_t, t \geq 0 \right)$ is a $\F$-martingale.
\end{proof}

\begin{proof}[Proof of Lemma \ref{licit}]
From Lemma \ref{martingale_mult}, since $\G_t  \subset \F_t$ and $S$ is a $\F$-martingale, the projection $\E(S_t|\G_t)$ is a $\G$-martingale. We also have:
\begin{align*}
\E(S_t|\G_t) = \E\left( \frac{Y_t \;  h_t(X_t(1)) }{\E(Z_0(h_{0}))}| \G_t \right) = \frac{Z_t(h_t)}{\E(Z_0(h_{0}))} =T_t,
\end{align*}
where we used that $X_{t}(1)$ has law $R_t$ conditionally on $\G_t$ for the third equality. Thus $(T_t, t \geq 0)$ is a $\G$-martingale.
\end{proof}

\begin{proof}[Proof of Theorem \ref{additive}]

From Lemma \ref{licit2}, the law of $X^h$ is absolutely continuous with respect to the law of $X$.  The existence of the a.s. limit of the empirical measure of $X^h$ follows from that of $X$ (but not the exchangeability of the sequence) and yields point (a). 
We prove point (b) now. Take $A \in \G_t$. 
\begin{align*}
\P(Z^{h} \in A) &= \E \left(S_t\ind_{A}(Z) \right) \\
&= \E \left(  \E \left( S_t | \G_t \right) \ind_{A}(Z) \right) \\
&= \P( T_t \; \ind_{A}(Z)) \\
&= \P( Z^H \in A),
\end{align*}
where we use Lemma \ref{licit} at the third equality and the definition of $Z^H$ for the last equality.  

\end{proof}

\begin{proof}[Proof of Proposition \ref{distinguish}]
Let $n\in \N$ be fixed, and let $(\phi_i)_{(1 \leq i \leq n)}$ be a collection of bounded and measurable functions on $E$.
\begin{align*}
\E\left(\prod_{1 \leq i \leq n} \phi_i(X_i^h(t))\right) &= \E \left( \frac{Y_t  \; h_t(X_{t}(1))}{\E(Z_0(h_{0}))}\prod_{1 \leq i \leq n} \phi_i(X_t(i)) \right) \\
&= \frac{1}{\E(Z_0(h_{0}))} \E \left( Y_t \; \E\left(h_t(X_{t}(1)) \phi_1(X_{t}(1)) \prod_{2 \leq i \leq n} \phi_i(X_t(i)) |\G_t \right) \right) \\
&= \frac{1}{\E(Z_0(h_{0}))} \E \left(Y_t \;  R_t(h_t \; \phi_1) \prod_{2 \leq i \leq n} R_t(\phi_i) \right) \\
&=  \E \left( \frac{Z_t(h_t)}{\E(Z_0(h_{0}))}  \; R_t\left(\frac{h_t \;  \phi_1}{R_t(h_t)}\right) \prod_{2 \leq i \leq n} R_t(\phi_i) \right) \\
&=  \E \left(R^h_t\bigg(\frac{h_t \; \phi_1}{R^h_t(h_t)}\bigg) \prod_{2 \leq i \leq n} R^h_t(\phi_i) \right), \\
\end{align*}
where we use Lemma \ref{licit2} at the first equality, the de Finetti Theorem at the third equality, and Theorem \ref{additive} at the last equality. Since functions of the type $\prod_{1 \leq i \leq n} \phi_i$ characterize the law of $n$-uple, this proves the Proposition. 
\end{proof}

\subsection{Applications}

Overbeck investigated in \cite{DA92} $h$-transform of measure valued diffusions, among which the Dawson Watanabe process (with quadratic branching mechanism) and the Fleming Viot process (which is the GFV process for $\nu=0$) using a martingale problem approach. He also provided a pathwise constructions in the first case, see \cite{OV94}.
We shall see in this last Section how Theorem \ref{additive} applies in both cases and sheds new light on Overbeck's results.

\subsubsection{Generalized Fleming Viot processes}
\label{subsectionGFV}

Recall the Generalized Fleming Viot process (with mutation) is the process $Z$ constructed in Section \ref{spatial2} when setting:
\begin{itemize}
\item $Y=\ind$,
\item $U$ is a subordinator with jumps no greater than $1$,
\end{itemize}
It also corresponds to the process $R$ introduced in Section \ref{spatial} when allowing for mutations.
We denote by $\phi$ the Laplace exponent of the subordinator $U$:
\begin{equation*}
\phi(\lambda)= c \lambda+ \int_{(0,1]} (1- \expp{-\lambda x}) \nu^U(dx)
\end{equation*}
where $c \geq 0$ and the L\'evy measure $\nu^U$ satisfies $\int_{(0,1]} x \; \nu^U(dx) < \infty$. The genealogy of the lookdown particle system is by construction described by the $\Lambda$-coalescent of Pitman \cite{PI99}. The finite measure $\Lambda$ is related to  $\nu$ and $c$  by $ x^{-2} \Lambda_{|(0,1]}(dx)=\nu(dx)$ and $\Lambda\{0\}=c$, and may be recovered from the characteristics of $\phi$ through the identity:
\begin{equation*}
\int_{[0,1]} g(x) \Lambda(dx) =c g(0)+ \int_{(0,1]} g(\sqrt{x}) \nu^U(dx), 
\end{equation*}
see Section 3.1.4 of \cite{DO99}.

Since $Y_t=1$, $Y$ is a martingale and we may apply results of Section \ref{results} for any non-negative space time harmonic function $(h_t(x), t \geq 0, x \in E)$ for the spatial motion $\rP$, that is any function such that $(h_t(\xi_t), t \geq 0)$ is a non-negative martingale where $\xi$ stands for the canonical process under $\rP$. Notice the construction of the particle system $X^h$ simplifies here since $(U^h,Y^h)\overset{(law)}{=}(U,Y)=(U,\ind)$.

Overbeck suggested in \cite{DA92} that in the particular case of the FV process, an additive $h$-transform looks like a FV process where ``\textit{the gene type of at least one family mutates as an $h$-transform of the one particle motion}''. This suggestion was made ``\textit{plausible}'' by  similar results known for superprocesses, see \cite{OV94} or the next Subsection, and a well known connection between superprocesses and Fleming-Viot processes which goes back to Shiga \cite{SH90}. We did not attempt to derive the additive $h$-transform for GFV processes in this way, since the connection between superprocesses and GFV processes is restricted to stable superprocesses and Beta GFV processes, see Birkner \textit{et. al} \cite{BI05}.

Theorem \ref{additive} allows us to see at first glance that the family which ``\textit{mutates as an $h$-transform}'' is the family generated by the first level particle in the lookdown process. Other advantages of Theorem \ref{additive} over the martingale problem approach is that it provides a pathwise approach and also applies for GFV processes.

\begin{rem}
\label{discretize}
If $\nu^U=0$ (or, equivalently, $\Lambda(dx)=\Lambda\{0\} \;\delta_{0}$), the truncated processes obtained by considering the first $N$ particles:
\begin{equation*}
Z^N_t(dx) \equal \frac{1}{N} \sum_{1 \leq n \leq N} \delta_{X_t(n)}(dx) \mbox{ and } Z^{N,h}_t(dx) \equal \frac{1}{N} \sum_{1 \leq n \leq N} \delta_{X^h_t(n)}(dx)
\end{equation*}
correspond respectively to the Moran model with $N$ particles (see \cite{DO99}) and its additive $h$-transform.
This proves our approach is robust, in the sense that we can also consider discrete population.
\end{rem}

Finally, we may interpret the $h$-transform as a conditioned process.
% (that is, we do the counterpart to Subsection 2.3 in Section 3). 
For fixed $s \geq 0$, the additive $h$-transform of the GFV process on $[0,s]$ may be obtained by conditioning a random particle chosen at time $t$, $t$ large, to move as an h-transform. For other conditionings on boundary statistics in the context of measure valued branching processes, we refer to Salisbury and Sezer \cite{SS09}.

\subsubsection{The Dawson Watanabe superprocess}
\label{DawsonWatanabe}

Recall a continuous state branching process is a strong Markov process characterized by a branching mechanism $\psi$ taking the form
\begin{align}
\label{branchingmechanism}
\psi(\lambda)&=\frac{1}{2}\sigma^{2}\lambda^{2} + \beta \lambda +\int_{(0,\infty)}(\expp{-\lambda u}-1+\lambda u\ind_{u\leq 1})\nu^Y(du),
\end{align}
for $\nu^Y$ a L\'evy measure such that $\int_{(0,\infty)} (1\wedge u^2) \nu^Y(du)<\infty$, $\beta \in \R$, and  $\sigma^2 \in \R^+$. We will denote it  CB($\psi$) for short.
More precisely, the CB($\psi$) process is the strong Markov process $Y$ with Laplace transform given by:
\begin{align*}
\E(\expp{- \lambda Y_t}|  Y_0=x)=\expp{-x u(\lambda,t)},
\end{align*}
where $u$ is the unique non-negative solution of the integral equation, holding for all $t \geq 0$, $\lambda \geq 0$: 
\begin{equation}
\label{u}
u(\lambda,t)+\int_{0}^{t} ds \  \psi\left(u(\lambda,s)\right)= \lambda.
\end{equation}
We assume that $\psi'(0+)>-\infty$, so that the CB($\psi$) has integrable marginals, and
\begin{equation*}
 (Y_t \expp{\psi'(0+) t}, t \geq 0)
\end{equation*}
is a martingale. The Dawson Watanabe process with general branching mechanism given by $\psi$ is the measure valued process $(Z_t, t \geq 0)$ constructed in Section \ref{spatial2} when:
\begin{itemize}
 \item $(Y_t, t \geq 0)$ is a CB.
\item $(U_t, t \geq 0)$ is the quadratic variation process of $Y$, $U_t=[Y](t)$. Therefore, $\Delta U_t=(\Delta Y_t)^2 \leq Y_t^2$.
\end{itemize}
Since the process $(Y_t \expp{\psi'(0+) t}, t \geq 0)$ is a martingale, we may apply our results for any non-negative function $(h_t(x), t \geq 0, x \in E)$ such that $(h_t(\xi_t) \expp{-\psi'(0+) t},t \geq 0)$ is a martingale.

We now link our results with the literature:
\begin{enumerate}
 \item When $Y$ is a subcritical CB process, meaning that $\psi'(0+) \geq0$, setting $m(t)=\expp{-\psi'(0+) t}$ and $h_t(x)=\expp{ \psi'(0+) t}$, we recover from Theorem \ref{additive} part of the Roelly \& Rouault \cite{RR89} and Evans \cite{EV93} decomposition. This $h$-transform may be interpreted in that case as the process conditioned on non extinction.
\item When $Y$ is a critical Feller diffusion and $\rP$ the law of a Brownian motion, if we assume we are given $h_t(x)$ a space time harmonic function for $\rP$ and set $m(t)=1$, we get from Theorem \ref{additive} the decomposition of the $h$-transform of the Dawson Watanabe process provided by Overbeck in \cite{OV94}.
\end{enumerate}

We identified in Remark \ref{interpretation} two effects of the additive $h$-transform: the total population is size biased and the first level particle follows an $h$-transform of $\rP$. We shall concentrate on the first effect, and explain how a ``spinal'' decomposition may be partly recovered from Theorem \ref{additive}: Lemma \ref{CBI-bias} identifies the size biased total mass process $Y^h=Z^h(\ind)$ with a branching process with immigration, and Lemma \ref{interpret} recognizes the first level particle as the source of the immigration.

%  in the case where $\sigma^2=0$ (we restrict to this case for the sake of clarity only). 

Let $\phi$ be the Laplace exponent of a subordinator. Recall a continuous state branching process with immigration with branching mechanism $\psi$ and immigration mechanism $\phi$, CBI($\psi$,$\phi$) for short, is a strong Markov process $(Y^i_t, t \geq 0)$ characterized by the Laplace transform:
\begin{equation*}
\E(\expp{- \lambda Y^i_t}|  Y^i_0=x)=\expp{-x u(\lambda,t) - \int_{0}^{t} ds \ \phi(u(\lambda,s))} .
\end{equation*}
We recall for the ease of reference the following well known lemma, and we stress that this Lemma also holds in the supercritical case $\psi'(0+) <0$.
\begin{lem}
\label{CBI-bias}
The process $Y^h$ defined by \reff{Yh} with $m(t)=\expp{-\psi'(0+) t}$ is a CBI($\psi$,$\phi$) with immigration mechanism given by $\tilde{\phi}(\lambda) \equal \psi'(\lambda)-\psi'(0+)$. 
\end{lem}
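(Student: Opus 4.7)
The plan is to compute the Laplace transform of $Y^h_t$ directly from the change of measure \reff{Yh} with $m(t)=\expp{-\psi'(0+) t}$, and match it with the Laplace transform of a CBI$(\psi,\tilde\phi)$. Since the weight $H(t,y)\equal y\,\expp{\psi'(0+) t}$ is a positive martingale for the CB$(\psi)$, the resulting $Y^h$ is automatically a Markov process whose transition semigroup is the Doob $H$-transform of that of $Y$; it therefore suffices to identify the one-dimensional marginals started from an arbitrary deterministic $x>0$.

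From the definition of $Y^h$ and the CB Laplace transform we have
\begin{align*}
\E(\expp{-\lambda Y^h_t}\mid Y^h_0=x)
&=\frac{\expp{\psi'(0+) t}}{x}\,\E(Y_t\,\expp{-\lambda Y_t}\mid Y_0=x) \\
&=-\frac{\expp{\psi'(0+) t}}{x}\,\partial_\lambda \expp{-x u(\lambda,t)}
=\expp{\psi'(0+) t}\,\partial_\lambda u(\lambda,t)\,\expp{-x u(\lambda,t)},
\end{align*}
so everything reduces to a formula for $\partial_\lambda u(\lambda,t)$. Differentiating the integral equation \reff{u} in $t$ yields $\partial_t u(\lambda,t)=-\psi(u(\lambda,t))$, and differentiating in $\lambda$ gives $\partial_t\partial_\lambda u=-\psi'(u)\,\partial_\lambda u$; since $u(\lambda,0)=\lambda$, one integrates to obtain
\begin{equation*}
\partial_\lambda u(\lambda,t)=\expp{-\int_0^t\psi'(u(\lambda,s))\,ds}.
\end{equation*}
Plugging this in, the Laplace transform above becomes
\begin{equation*}
\exp\!\Bigl(-x\,u(\lambda,t)-\int_0^t\bigl(\psi'(u(\lambda,s))-\psi'(0+)\bigr)\,ds\Bigr)
=\exp\!\Bigl(-x\,u(\lambda,t)-\int_0^t\tilde\phi(u(\lambda,s))\,ds\Bigr),
\end{equation*}
which is exactly the one-dimensional Laplace transform of a CBI$(\psi,\tilde\phi)$ started from $x$.

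It remains to check that $\tilde\phi$ is the Laplace exponent of a subordinator, i.e. that it corresponds to admissible immigration. From \reff{branchingmechanism} one computes
\begin{equation*}
\tilde\phi(\lambda)=\psi'(\lambda)-\psi'(0+)=\sigma^2\lambda+\int_{(0,\infty)}u\,(1-\expp{-\lambda u})\,\nu^Y(du),
\end{equation*}
which is the Laplace exponent of a subordinator with drift $\sigma^2$ and L\'evy measure $u\,\nu^Y(du)$, the integrability $\int(1\wedge u)\,u\,\nu^Y(du)=\int(u\wedge u^2)\,\nu^Y(du)<\infty$ being exactly that guaranteed by the assumptions on $\nu^Y$ and $\psi'(0+)>-\infty$. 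The only slightly delicate point is the derivation under the integral sign used to obtain the formula for $\partial_\lambda u$: this is routine because $u(\cdot,t)$ is the unique non-negative solution of \reff{u} and $\psi$ is $\C^1$ on $(0,\infty)$ with locally bounded derivative, so dominated convergence applies on $(\lambda_0,\lambda_1)\subset(0,\infty)$.
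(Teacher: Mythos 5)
Your proof is correct and follows exactly the route the paper has in mind: the paper dismisses this lemma as ``classical and rel[ying] on computation of the Laplace transforms,'' and your argument is precisely that computation, carried out in full (the identity $\partial_\lambda u(\lambda,t)=\expp{-\int_0^t\psi'(u(\lambda,s))ds}$ obtained from \reff{u}, the resulting CBI Laplace transform, and the verification that $\tilde\phi$ has drift $\sigma^2$ and L\'evy measure $u\,\nu^Y(du)$, consistent with Lemma \ref{interpret}). Nothing to object to; you have simply supplied the details the paper omits, valid also in the supercritical case since only $\psi'(0+)>-\infty$ is used.
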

The proof is classical and relies on computation of the Laplace transforms. Notice that in the case where the CB process $Y$ extincts almost surely, the CBI  process $Y^h$ may also be interpreted as the CB process $Y$ conditioned on non extinction in remote time, see Lambert \cite{L07}.

The total mass process $Y^h=Z^h(\ind)$ is from Lemma \ref{CBI-bias} a CBI process whereas $Y$ is by assumption a CB process. We may thus wonder ``who'' are the immigrants in the population represented by the particle system $X^h$. The following Lemma shows that the offsprings of the first level particle are the immigrants.  Recall $j_1$ refers to the first level sampled in the lookdown construction. Let us denote $j_1(s)$ instead of $j_1$ for indicating the dependence in $s$.
\begin{lem}
\label{interpret}
The process $\left( \sum_{0 \leq s \leq t} \Delta Y^h_s \; \ind_{\{j_1(s)=1\}}, t \geq 0 \right)$ is a pure jump subordinator with L\'evy measure $u\nu^Y(du)$. 
\end{lem}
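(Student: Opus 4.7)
The plan is to recognize $\sum_{0 \leq s \leq t} \Delta Y^h_s \ind_{\{j_1(s) = 1\}}$ as the pure-jump part of the immigration subordinator appearing in the CBI decomposition of $Y^h$. By Lemma \ref{CBI-bias}, $Y^h$ is a CBI process with branching mechanism $\psi$ and immigration mechanism
$$\tilde\phi(\lambda) = \psi'(\lambda) - \psi'(0+) = \sigma^2 \lambda + \int_{(0,\infty)} u\,(1 - e^{-\lambda u}) \nu^Y(du),$$
so the pure-jump part of the immigration is exactly a subordinator with L\'evy measure $u \nu^Y(du)$. The strategy is then to match, via a predictable compensator computation followed by a thinning argument, the jumps of $Y^h$ for which $j_1 = 1$ to this immigration subordinator.

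First, I would compute the predictable compensator of the jump random measure $M(ds, du) := \sum_s \delta_{(s, \Delta Y^h_s)}(ds, du) \ind_{\{\Delta Y^h_s > 0\}}$ of $Y^h$. Since $Y^h$ is CBI($\psi, \tilde\phi$), this compensator decomposes as the compensator of the jumps of the branching part plus the L\'evy measure of the immigration part, namely
$$ds \otimes \bigl(Y^h_{s-} \nu^Y(du) + u \nu^Y(du) \bigr) = ds \otimes (Y^h_{s-} + u) \nu^Y(du).$$

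Next, I would analyze the event $\{j_1(s) = 1\}$ using the lookdown construction of Section \ref{spatial2}. Each jump of $Y^h$ at time $s$ produces an atom $(s, \pi)$ of $N^\rho$ whose unique non-trivial block has asymptotic frequency $\sqrt{\Delta U^h_s}/Y^h_s$. In the Dawson-Watanabe setting $U = [Y]$ is a path functional, and this property is preserved by the $h$-transform, so $U^h = [Y^h]$ and hence $\Delta U^h_s = (\Delta Y^h_s)^2$; the frequency therefore equals $\Delta Y^h_s / Y^h_s = u / (Y^h_{s-} + u)$ when $\Delta Y^h_s = u$. The event $\{j_1(s) = 1\}$ corresponds to "$1$ belongs to the non-trivial block of $\pi$", which, by the independent Bernoulli construction of $\pi$ given $(Y^h, U^h)$, has conditional probability $u/(Y^h_{s-} + u)$ independently of the past. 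The thinned random measure $M^*(ds, du) := \ind_{\{j_1(s) = 1\}} M(ds, du)$ therefore has compensator
$$ds \otimes (Y^h_{s-} + u) \nu^Y(du) \cdot \frac{u}{Y^h_{s-} + u} = ds \otimes u\, \nu^Y(du).$$

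Since this compensator is deterministic, Watanabe's characterization of Poisson random measures implies that $M^*$ is a Poisson random measure on $\R_+ \times (0, \infty)$ with intensity $dt \otimes u\, \nu^Y(du)$. Consequently $\sum_{0 \leq s \leq t} \Delta Y^h_s \ind_{\{j_1(s) = 1\}} = \int_{[0,t] \times (0,\infty)} u\, M^*(ds, du)$ is a pure jump subordinator with L\'evy measure $u \nu^Y(du)$, as required. The most delicate point is the conditional independence used in the thinning step: one needs that, given the jump sizes of $Y^h$, the Bernoulli variables determining membership of level $1$ in the non-trivial blocks are independent of the past of $(Y^h, X^h)$. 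This is transparent from the construction of $N^\rho$ in Section \ref{spatial2}, where, conditionally on $(Y^h, U^h)$, each $\pi$ is sampled independently from everything else according to its prescribed asymptotic frequency, but it deserves to be made explicit.
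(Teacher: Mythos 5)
Your proposal is correct and follows essentially the same route as the paper's proof: identify $Y^h$ as a CBI$(\psi,\tilde\phi)$ via Lemma \ref{CBI-bias}, compute the predictable compensator $ds\,(Y^h_{s-}+u)\,\nu^Y(du)$ of the jump measure, and thin by the conditional probability $u/(Y^h_{s-}+u)$ of the event $\{j_1(s)=1\}$ to obtain the deterministic compensator $ds\,u\,\nu^Y(du)$. The only difference is that you make explicit the final appeal to Watanabe's characterization of Poisson random measures, which the paper leaves implicit.
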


% \begin{rem}
% % whereas the rest of the population reproduces as a usual CB($\psi$) process. 
% \end{rem}

\begin{proof}
By assumption, the process $Y$ is a CB($\psi$) and from Lemma \ref{CBI-bias}, $Y^h$ is a CBI($\psi,\tilde{\phi}$).
From the Poissonian construction of CBI, we have that the point measure 
$$\sum_{0 \leq s \leq t} \delta_{(s,\Delta Y^h_s)} (ds,du)$$ 
has for predictable compensator
$$ ds \ (Y^h_{s-} \nu^Y(du) + u \nu^Y(du)).$$
The expression of the compensator may be explained as follows. The term $ds \ Y^h_{s-} \nu^Y(du)$ comes from the time change of the underlying spectrally positive L\'evy process, called the Lamperti time change (for CBs). The term $ds \ u \nu^Y(du)$ is independent of the current state of the population and corresponds to the immigration term. 
Then, conditionally on the value of the jump $\Delta Y^h_s=u$, the event $\{j_1(s)=1\}$ has probability $$\frac{u}{Y^h_s}=\frac{u}{Y^h_{s-}+u}$$ independently for each jump. Therefore, the predictable compensator of the point measure $$\sum_{0 \leq s \leq t} \delta_{(s,\Delta Y^h_s)} (ds,du) \ind_{\{j_1(s)=1\}}$$ is
$$ ds \ \left( \frac{u}{Y^h_{s-}+u} \right) (Y^h_{s-} \nu^Y(du) + u \nu^Y(du))= ds \  u \nu^Y(du), $$ 
% But the measure $u \nu^Y(du)$ is the L\'evy measure associated with the immigration mechanism $\tilde{\phi}(\lambda)$, which has no drift under the assumption that $\sigma^2=0$. 
This ends up the proof.
\end{proof}

\begin{rem}
% Note the following extension of Lemma \ref{interpret}, designed to 
Understanding the action of the continuous part of the subordinator requires to work with the discrete particle system generated by the first $N$ particles. Namely, it is possible to prove that the family of processes
$$ \left(\sum_{0 \leq s \leq t} Y_s^h \;  \frac{\#{\{1 \leq i \leq N, j_i(s) \leq N \}}}{N} \ind_{\{j_1(s)=1, j_2(s) \leq N\}}, t\geq 0 \right)$$
converges almost surely as $N\to \infty$ in the Skorohod topology towards a subordinator with Laplace exponent $\tilde{\phi}$.
% 
% . It is possible, but tedious, to prove that the following process, as a function of $t$, is a subordinator with Laplace exponent $\tilde{\phi}$: 
\end{rem}

\textbf{Acknowledgments}. The author is grateful to Stephan Gufler and Anton Wakolbinger for letting him know about the intertwining relationship found by Jan Swart and to Jean-Fran\c  cois Delmas for careful reading.

% \bibliographystyle{abbrv}
% \bibliography{doku}

\end{document}